\theoremstyle{plain}
\newtheorem{lemma}[subsection]{Lemma}
\newtheorem{proposition}[subsection]{Proposition}
\newtheorem{theorem}[subsection]{Theorem}
\newtheorem{corollary}[subsection]{Corollary}
\theoremstyle{definition}
\newtheorem{definition}[subsection]{Definition}
\theoremstyle{remark}
\newtheorem{remark}[subsection]{Remark}
\newcommand{\defn}{\textbf}
\newcommand{\normal}{\lhd}
\def\al{\alpha}
\def\be{\beta}
\def\de{\delta}
\def\pa{\partial}
\def\Gm{\Gamma}
\def\lra{\longrightarrow}
\def\rra{\rightarrow}
\def\otm{\otimes}
\def\uns{\underset}
\DeclareMathOperator{\Coker}{Coker}
\DeclareMathOperator{\Image}{Im}
\renewcommand{\Im}{\Image}
\DeclareMathOperator{\Ker}{Ker}
\DeclareMathOperator{\Moore}{N}
\DeclareMathOperator{\Nerve}{Ner}
\DeclareMathOperator{\Tot}{Tot}
\newcommand{\A}{A}
\newcommand{\F}{F}
\newcommand{\T}{T}
\newcommand{\U}{U}
\newcommand{\Z}{\ensuremath{\mathbb{Z}}}
\newcommand{\HH}{\ensuremath{\mathcal{H}}}
\newcommand{\LL}{\ensuremath{\mathcal{L}}}
\newcommand{\PP}{\ensuremath{\mathcal{P}}}
\newcommand{\FF}{\ensuremath{P}}
\newcommand{\Ab}{\ensuremath{\mathsf{Ab}}}
\newcommand{\Cat}{\ensuremath{\mathsf{Cat}}}
\newcommand{\Gp}{\ensuremath{\mathsf{Gp}}}
\newcommand{\Set}{\ensuremath{\mathsf{Set}}}
\newcommand{\XMod}{\ensuremath{\mathsf{XMod}}}
\begin{document}
\date{\today}

\title[A comonadic interpretation of Baues--Ellis homology]{A comonadic interpretation of\\
Baues--Ellis homology of crossed modules}

\thanks{This work was partially supported by Agencia Estatal de Investigación (Spain), grant MTM2016-79661-P (EU supported included)}

\author{Guram Donadze}
\address[Guram Donadze]{Indian Institute of Science Education and Research (IISER-TRV), CET Campus, TVPM-16, Thiruvananthapuram, Kerala, India}
\thanks{The first author wishes to thank Prof.~Manuel Ladra Gonz\'alez and the University of Santiago de Compostela for their kind hospitality} 
\email{gdonad@gmail.com}

\author{Tim Van~der Linden}
\address[Tim Van~der Linden]{Institut de Recherche en Math\'ematique et Physique, Universit\'e catholique de Louvain, chemin du cyclotron~2 bte~L7.01.02, 1348 Louvain-la-Neuve, Belgium}
\thanks{The second author is a Research Associate of the Fonds de la Recherche Scientifique--FNRS. He wishes to thank Dr~Viji J.~Thomas with the Indian Institute of Science Education and Research at Thiruvananthapuram, and Prof.~Manuel Ladra Gonz\'alez with the University of Santiago de Compostela for their kind hospitality} 
\email{tim.vanderlinden@uclouvain.be}

\begin{abstract}
We introduce and study a homology theory of crossed modules with coefficients in an abelian crossed module. We discuss the basic properties of these new homology groups and give some applications. We then restrict our attention to the case of integral coefficients. In this case we regain the homology of crossed modules originally defined by Baues and further developed by Ellis. We show that it is an instance of Barr--Beck comonadic homology, so that we may use a result of Everaert and Gran to obtain Hopf formulae in all dimensions.
\end{abstract}

\subjclass[2010]{18G10; 18G40; 18C15; 20J06; 55N35}

\keywords{Comonadic homology; crossed module; spectral sequence; Hopf formula}

\maketitle

\section{Introduction}

The concept of a crossed module originates in the work of Whitehead~\cite{Wh}. It was introduced as an algebraic model for path-connected CW spaces whose homotopy groups are trivial in dimensions strictly above  $2$. The role of crossed modules and their importance in algebraic topology and homological algebra are nicely demonstrated in the articles~\cite{Breen, BH, Brown-Loday, LR, MW, Wh}, just to mention a few. The study of crossed modules as algebraic objects in their own right has been the subject of many investigations, amongst which the Ph.D.\ thesis of Norrie~\cite{Norrie-PhD} is prominent. She showed that some group-theoretical concepts and results have suitable
counterparts in the category of crossed modules. 

In~\cite{Baues1991}, Baues defined a (co)homology theory of crossed modules via classifying spaces, which was studied using algebraic methods by Ellis in~\cite{Ellis1992}. A different approach to (co)homo\-logy of crossed modules is due to Carrasco, Cegarra and R.-Grandje\'an. In particular, in the paper~\cite{Carrasco-Homology}, the authors develop Barr--Beck comonadic homology~\cite{Barr-Beck} of crossed modules, relative to the canonical comonad which arises from the forgetful adjunction to the category of sets, and with coefficients in the abelianisation functor. In the article~\cite{GLP} certain relationships between these two homology theories are explored.

In the present article we introduce and study a new homology theory of crossed modules---one where coefficients are taken in an abelian crossed module. In the case of integral coefficients, it restricts to the theory of Baues and Ellis. We furthermore show that, in this particular case, homology forms a non-abelian derived functor, which may be obtained via comonadic homology as in the approach of Carrasco, Cegarra and R.-Grandje\'an. Since the functor which is being derived turns out to be ``sufficiently nice'', we may apply results of Everaert and Gran~\cite{Everaert-Gran-TT} leading to an interpretation of the homology objects via higher Hopf formulae, extending those of Brown and Ellis~\cite{Brown-Ellis, Donadze-Inassaridze-Porter, EGVdL, TomasThesis}.

Let us give a quick overview of the results in the paper. Consider a crossed module $(H, G, \mu\colon{H\to G})$ acting on an abelian crossed module $(C, A, \nu)$; this means that $(C, A, \nu)$ carries the structure of a Beck module~\cite{Beck} over $(H, G, \mu)$. Then we denote by $H_n((H, G, \mu), (C, A, \nu))$ the $n$th homology group of $(H, G, \mu)$ with coefficients in $(C, A, \nu)$, defined as the homology of the total complex of some suitable double complex---see Section~\ref{Homology} for details. If $(C, A, \nu)=(0, \Z, 0)$ and $(H, G, \mu)$ acts trivially on $(0, \Z, 0)$, we regain the homology of $(H, G, \mu)$ with integral coefficients as defined by Baues and Ellis.

For an inclusion crossed module $(N, G, \sigma)$ acting on an abelian crossed module $(C, A, \nu)$, we obtain the exact sequence of abelian groups
\begin{align*}
&\cdots \to H_{n-1}(G/N, \Ker \nu) \to H_n((N, G, \sigma), (C, A, \nu))\to H_n(G/N, A/\Im\nu)\\
&\to H_{n-2}(G/N, \Ker \nu) \to \cdots
\end{align*}
where $H_{n}(G/N, \Ker \nu)$ and $H_n(G/N, A/\Im\nu)$ denote the Eilenberg--Mac\,Lane homologies of $G/N$ with coefficients in
$\Ker \nu$ and in $A/\Im\nu$, respectively: see Proposition~\ref{prop3}.
 
A first main result of this paper is the Lyndon--Hochschild--Serre spectral sequence adapted to crossed modules (Theorem~\ref{theo1}). We use it to show that if two
crossed modules are (weakly) homotopically equivalent, then their homology groups are isomorphic (Proposition~\ref{prop6}). Another consequence of this spectral sequence is a five-term exact sequence relating the homology groups of crossed modules in low degrees (Proposition~\ref{prop7}). We also prove that the third homology of a crossed module (as defined by Baues and Ellis) is isomorphic to the first homology of the same crossed module with certain coefficients (Proposition~\ref{prop8}). 

In Section~\ref{Section Left Derived Functors} we show that the homology groups (with integral coefficients) of a crossed module can be described via non-abelian left derived functors (Theorem~\ref{theo2}). The comonad on the category of crossed modules is still the one induced by the adjunction to sets as in~\cite{Carrasco-Homology}. However, the functor which is being derived is different: it sends a crossed module $(H, G, \mu)$ to the group $G/(\mu(H)\cdot[G, G])$, where $\mu(H)\normal G$ is the image of $\mu$ and $[G, G]\normal G$ is the ordinary commutator subgroup. This functor $\A\colon{\XMod\to \Ab}$ turns out to be an example considered in~\cite{Everaert-Gran-TT}. The theory developed there leads to an interpretation of Baues--Ellis homology via higher Hopf formulae (Theorem~\ref{Theorem Hopf}).

\section{Crossed modules, their nerves and their actions} \label{sec1}

A \defn{crossed module} $(H, G,\mu)$ is a homomorphism of groups $\mu \colon H\rra G$, together with an action of $G$ on
$H$, such that the identities 
\[
\begin{cases}
\mu (^gh)= g \mu(h) g^{-1} & \text{(Precrossed module identity)}\\
^{\mu (h)}h'=h h' h^{-1} & \text{(Peiffer identity)}
\end{cases}
\]
hold for all $g\in G$ and $h$, $h'\in H$. A common instance of a crossed module is that of a group~$G$ possessing a normal subgroup $N\normal G$; the inclusion homomorphism $N\hookrightarrow G$ is a crossed module with
$G$ acting on $N$ by the conjugation in $G$, called the \defn{inclusion crossed module} of $N\normal G$. More generally, given any crossed module $(H,G,\mu)$, we may consider the semidirect product $H\rtimes G$. Then there are
homomorphisms $s\colon H\rtimes G\to G\colon (h, g)\mapsto g$, $t\colon H\rtimes G\to G\colon (h, g)\mapsto \mu(h)g$ and $e\colon G\to H\rtimes G\colon g\to (1,g)$ and a binary operation $(h', g')\circ (h, g) = (h h', g)$ for all pairs $(h, g)$, $(h', g')\in H\rtimes G$ such that $\mu(h) g = g'$.
This composition $\circ$ with the source map $s$, target map $t$ and identity map $e$ constitutes an internal category in the category of groups. Conversely, given any internal category $(M,G,s,t,e)$ in~$\Gp$, the induced map ${tk\colon \Ker s\to G}$ where $k\colon \Ker s\to M$ denotes the kernel of~$s$, together with the restriction to $G$ of the conjugation action of $M$ on $\Ker s$, forms a crossed module.

A \defn{morphism of crossed modules} $(\rho,\nu)\colon (H,G,\mu)\to(H',G',\mu')$ is a commutative square of groups
\[
\xymatrix{
H \ar[r]^-{\mu}\ar[d]_-\rho & G \ar[d]^-{\nu}\\
H' \ar[r]_-{\mu'}& G'}
\]
such that $\rho(^g h)=\;^{\nu(g)}\rho(h)$ for all $g\in G$, $h\in H$. The category of crossed modules with their morphisms is denoted $\XMod$. It is well known to be a variety of $\Omega$-groups in the sense of Higgins~\cite{Higgins, Carrasco-Homology} and as such it is an example of a semi-abelian variety of algebras~\cite{Janelidze-Marki-Tholen}. Furthermore~\cite{LR,MacLane}, it is equivalent to the category $\Cat(\Gp)$ of internal categories in the category of groups, essentially via the explanation recalled above.

Given a crossed module $(H,G,\mu)$, the nerve of its category structure forms the simplicial group
$\Nerve_*(H,G,\mu)$, called the \defn{nerve} of the crossed module $(H,G,\mu)$. It is not difficult to see that $\Nerve_n(H,G,\mu)=H\rtimes (\cdots (H\rtimes G)\cdots)$ with $n$ semidirect factors of $H$, and that the face and degeneracy homomorphisms are given by
\begin{align*}
d_0(h_1, \ldots, h_n, g) &= (h_2, \ldots, h_n, g),\\
d_i(h_1, \ldots, h_n, g) &= (h_1, \ldots, h_i h_{i+1}, \ldots, h_n, g), &0<i<n,\\
d_n(h_1, \ldots, h_n, g) &= (h_1, \ldots, h_{n-1}, \mu(h_n) g),\\
s_i(h_1, \ldots, h_n, g) &= (h_1, \ldots, h_i, 1, h_{i+1}, \ldots,
h_n, g), &0\le i\le n.
\end{align*}

Given a simplicial group $G_*=(G_n, d_n^i, s_n^i)$, recall that its \defn{Moore normalisation} is a chain complex of groups
${\Moore} G_*=({\Moore} G_n, \pa_n)$, where $\Moore_0G_*=G_0$ while
\begin{align*}
{\Moore}_nG_*=\bigcap_{i=0}^{n-1} \Ker
d^n_i\quad\text{and}\quad \pa_n=d^n_n|_{\Moore_nG_*}, \quad n\geq 1.
\end{align*}
For $n\geq 1$, the \defn{$n$th homotopy group} of $G_*$ is defined as
\[
\pi_n(G_*)=\Ker\pa_n/\Im\pa_{n+1}.
\]

When an augmented simplicial group $(G_*,d^0_0, G)$ is given, then in dimension zero we calculate the homotopy group as
$\pi_0(G_*,d^0_0, G)=\Ker d^0_0/\Im\pa_1$. We say that the augmented simplicial group $(G_*,d^0_0, G)$ is
\defn{aspherical} if $\pi_n(G_*,d^0_0, G)=0$ for all $n\ge 0$ and $\Im d_0^0 = G$.

Given a crossed module $(H, G, \mu)$, the Moore complex of its nerve is trivial in dimensions $\geq 2$. In fact the Moore complex
of $\Nerve_*(H,G,\mu)$ is just the original crossed module up to isomorphism with $H$ in dimension $1$ and $G$ in dimension $0$.
Consequently, we have the isomorphisms
\begin{align}\label{equ1}
\pi_1 (\Nerve_*(H,G,\mu))\cong\Ker \mu\qquad\text{and}\qquad \pi_0(\Nerve_*(H,G,\mu))\cong G/\Im \mu.
\end{align}

Suppose we are given composable morphisms $(\rho,\nu)\colon (H,G,\mu)\to(H',G',\mu')$ and $(\rho',\nu')\colon (H',G',\mu')\to(H'',G'',\mu'')$ such that $\Ker\rho' =\rho$ and $\Ker\nu'=\nu$, while $\Coker \rho=\rho'$ and $\Coker \nu =\nu'$ in $\Gp$. This is precisely saying that $\Ker(\rho',\nu')=(\rho,\nu)$ and $\Coker(\rho,\nu)=(\rho',\nu')$ in $\XMod$, so that $(\rho,\nu)$ and $(\rho',\nu')$ form a \defn{short exact sequence} of crossed modules. In this case we write
\begin{align*}
\xymatrix{0 \ar[r] & (H,G,\mu) \ar[r]^-{(\rho,\nu)} & (H',G',\mu') \ar[r]^-{(\rho',\nu')} & (H'',G'',\mu'') \ar[r] & 0.}
\end{align*}
Recall that the nerve functor is exact, sending any short exact sequence of crossed modules to a short exact sequence of simplicial groups. 

If both $G$ and $H$ are abelian groups with trivial action of~$G$ on $H$, then $(H, G,\mu)$ is said to be an \defn{abelian} crossed module. A crossed module is abelian precisely when it is an an abelian group object in the category of crossed modules~\cite{Carrasco-Homology}. Given a crossed module $(H, G,\mu)$, a \defn{Beck module}~\cite{Beck} over $(H, G,\mu)$ is an abelian group object in the slice category $(\XMod\downarrow (H, G,\mu))$. Since the category of crossed modules satisfies the so-called \emph{Smith is Huq} condition~\cite{MFVdL,ACC}, such an internal abelian group object is the same thing as a split exact sequence
\begin{align*}
\xymatrix{0 \ar[r] & (M,P,\nu) \ar[r] & (H',G',\mu') \ar@<.5ex>[r] & (H,G,\mu) \ar@<.5ex>[l] \ar[r] & 0.}
\end{align*}
where $(M, P, \nu)$ is an abelian crossed module. As always in a semi-abelian category, the middle object $(H',G',\mu')$ in such a split exact sequence may be seen as a semidirect product $(M,P,\nu)\rtimes_{\xi} (H,G,\mu)$ where $\xi$ is an \emph{internal action}~\cite{Bourn-Janelidze:Semidirect,BJK} of $(H,G,\mu)$ on $(M,P,\nu)$. In the case of crossed modules, these internal actions agree with the concept of an action studied by Norrie~\cite{Norrie} and Forrester-Barker~\cite{FB}. Here we recall the presentation given in~\cite{CIKL}. An action of a crossed module $(H, G, \mu)$ on another crossed module $(M, P, \nu)$ is completely determined by an action of $G$ (and so $H$) on $M$ and $P$ together with a map $\xi \colon H\times P \to M$ such that the conditions 
\begin{align*}
\nu (^gm)&= \,^g \nu(m),
& ^{g}(^pm)&= \,^{^gp}m, \\
\nu \xi (h, p)&=\,^{\mu(h)}pp^{-1}, 
&\xi (h, \nu(m))&=\,^{\mu(h)}mm^{-1}, \\
^g\xi (h, p)&= \xi (^gh, \;^gp), 
&\xi (hh', p)&= \,^{\mu(h)}\xi (h', p)\xi (h, p), \\
\xi (h, pp')&= \xi (h, p)^p\xi (h, p')
\end{align*}
hold for all $g\in G$, $h$, $h'\in H$, $p$, $p'\in P$, $m\in M$. For instance, the trivial action of~$G$ on~$M$ and $P$
together with the trivial map $\xi \colon H\times P \to M$ provides an action of $(H, G, \mu)$ on $(M, P, \nu)$,
called the \defn{trivial action}.

Thus a morphism of $(H, G, \mu)$-modules $(\al, \be)\colon (M, P, \nu)\to (M', P', \nu')$ amounts to a pair of $G$-module homomorphisms
$\al \colon M\to M'$, $\be\colon P\to P'$ such that $\be\nu = \nu'\al$ and $\al \xi = \xi' (1_H\times \be)$. Note that given an action of $(H, G, \mu)$ on $(M, P, \nu)$ and a morphism of crossed modules ${(\bar{H}, \bar{G}, \bar{\mu})\to (H, G, \mu)}$, we naturally obtain an action of $(\bar{H}, \bar{G}, \bar{\mu})$ on $(M, P, \nu)$. Moreover, the conditions mentioned above imply that the action of $H$ on $\Ker \nu$ is trivial, while the
action of $H$ on $P$ is trivial modulo $\Im \nu$. Thus, we have naturally defined actions of $G/\Im \mu$ on both $\Ker \nu$ and
$P/\Im \nu$.

\section{ Homology of crossed modules with coefficients}\label{Homology}

Any action of a crossed module $(H, G, \mu)$ on another crossed module
$(M, P, \nu)$ induces a (pointwise) action of the simplicial group $\Nerve_*(H, G, \mu)$ on $\Nerve_*(M, P, \nu)$. Indeed, actions correspond to split extensions, and since the nerve functor preserves all limits, it does also preserve split extensions, and the corresponding actions.

Given a group $G$, let $B_*(G)=(B_*(G), \pa )$ denote the standard $G$-resolution over~$\Z$: we set $B_{-1}(G)=\Z$, and for $m\geq 0$,
\[
B_m(G)=\underbrace{\Z(G)\otimes \Z(G)\otimes \cdots \otimes \Z(G)}_{\text{$m+1$ factors}}
\]
where $\Z(G)$ denotes the group ring over $G$, and $\pa (g_0 \otimes g_1 \otimes \cdots \otimes g_m)$ is 
\begin{align*}
(-1)^{m}g_0 \otimes g_1\otimes\cdots \otimes g_{m-1}+\sum^{m-1}_{i=0}(-1)^{i}g_0 \otimes \cdots \otimes g_ig_{i+1}\otimes \cdots \otimes g_m.
\end{align*}
We know that $B_m(G)$ has a structure of a right $G$-module defined by
\begin{align*}
(g_0 \otimes g_1\otimes \cdots \otimes g_m)g =(g^{-1}g_0 \otimes g_1\otimes \cdots \otimes g_m).
\end{align*}

Suppose that $G_*=(G_n, d_n^i, s_n^i)$ is a simplicial group acting on an abelian simplicial group $A_*=(A_n, \pa_n^i, t_n^i)$, i.e., each
$A_n$ is an abelian group. Then for each $n\geq 1$ we have a map of complexes
$B_*(G_n)\otimes_{G_n} A_n \to B_*(G_{n-1})\otimes_{G_{n-1}} A_{n-1}$ defined by the alternating sum of face homomorphisms:
\begin{align*}
& b\otimes a \mapsto \sum^{n}_{i=0}(-1)^{i} d_n^i(b)\otimes \pa_n^i(a) ,
\end{align*}
for all $b\in B_m(G_n)$, $a\in A_n$. In this way we obtain the bicomplex of abelian groups
\[
\xymatrix{{} \ar[d] & {} \ar[d] & {} \ar[d]\\
B_2(G_{0})\otm_{G_{0}}A_{0} \ar[d] & B_2(G_{1})\otm_{G_{1}}A_{1} \ar[d] \ar[l] & B_2(G_{2})\otm_{G_{2}}A_{2} \ar[d] \ar[l] & \cdots \ar[l] \\
B_1(G_{0})\otm_{G_{0}}A_{0} \ar[d] & B_1(G_{1})\otm_{G_{1}}A_{1} \ar[d] \ar[l] & B_1(G_{2})\otm_{G_{2}}A_{2} \ar[d] \ar[l] & \cdots \ar[l] \\
B_0(G_{0})\otm_{G_{0}}A_{0} & B_0(G_{1})\otm_{G_{1}}A_{1} \ar[l] & B_0(G_{2})\otm_{G_{2}}A_{2} \ar[l] &\cdots \ar[l]}
\]
which we write $B(G_*, A_*)$.

\begin{definition} 
For $n\geq 0$, the \defn{$n$th homology group of $G_*$ with coefficients in $A_*$} is defined by the formula
\begin{align*}
& H_n(G_*, A_* )=H_n \big(\Tot B(G_*, A_*)\big).
\end{align*}
\end{definition}

This immediately gives a first quadrant spectral sequence
\begin{align*}
E^1_{pq}=H_q(G_p, A_p) \Rightarrow H_{p+q}(G_*, A_*),
\end{align*}
where $H_q(G_p, A_p)$ is the classical Eilenberg--Mac\,Lane homology.

\begin{definition} 
Let $(H, G, \mu)$ be a crossed module acting on an abelian crossed module $(C, A, \nu)$. Then,
\defn{the $n$th homology group of $(H, G, \mu)$ with coefficients in $(C, A, \nu)$} is defined by the formula
\begin{align*}
& H_n((H, G, \mu), (C, A, \nu))=H_n \big(\Tot B(\Nerve_*(H, G, \mu), \Nerve_*(C, A, \nu))\big) , \quad n\geq 0.
\end{align*}
\end{definition}

Suppose that a crossed module $(H, G, \mu)$ acts trivially on the inclusion crossed module $(0, \Z, 0)$. Then we write $H_n(H, G, \mu)$ instead of $H_n((H, G, \mu), (0, \Z, 0))$ and call it the \defn{$n$th homology group of $(H, G, \mu)$ with integral coefficients}. 

\begin{proposition}
For each $n\geq 0$, $H_{n}(H, G, \mu)$ is isomorphic to the $n$th homology of the crossed module $(H, G, \mu)$ with integral coefficients considered in~\cite{Baues1991} and~\cite{Ellis1992}.
\end{proposition}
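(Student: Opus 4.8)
The plan is to compare our double-complex definition of $H_n(H,G,\mu)$ with whatever explicit construction Baues and Ellis use, by identifying the intermediate chain complex $\Tot B(\Nerve_*(H,G,\mu), \Nerve_*(0,\Z,0))$ with a complex that is manifestly the one computing Baues--Ellis homology. First I would unwind the coefficient side: since $(C,A,\nu)=(0,\Z,0)$ carries the trivial action, $\Nerve_*(0,\Z,0)$ is the constant simplicial abelian group on $\Z$, so in each simplicial degree $p$ the $p$th column $B_*(G_p)\otm_{G_p}\Z$ (where $G_p=\Nerve_p(H,G,\mu)=H\rtimes(\cdots\rtimes G)$) is just the bar resolution computing the Eilenberg--Mac\,Lane homology $H_*(G_p;\Z)$ with trivial coefficients. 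Hence the $E^1$-page of the spectral sequence reads $E^1_{pq}=H_q(\Nerve_p(H,G,\mu);\Z)$, converging to $H_{p+q}(H,G,\mu)$; equivalently, $\Tot B(\Nerve_*(H,G,\mu),\Nerve_*(0,\Z,0))$ computes the homology of the simplicial group $\Nerve_*(H,G,\mu)$ as a simplicial set (i.e.\ the homology of its classifying space), because taking $\Z(-)\otm$ a bar resolution of the trivial module and totalising is one standard model for the homology of a simplicial group.

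Next I would recall the Baues--Ellis definition. Baues's (co)homology of a crossed module $(H,G,\mu)$ is defined via the classifying space $B(H,G,\mu)$ of the crossed module, and Ellis reworked this algebraically; the upshot (see \cite{Baues1991,Ellis1992}) is that $H_n(H,G,\mu)$ is the $n$th homology of the classifying space $B|\Nerve_*(H,G,\mu)|$, or equivalently the homology of the simplicial set underlying the nerve $\Nerve_*(H,G,\mu)$. So the statement reduces to the assertion that our total-complex homology of $B(\Nerve_*(H,G,\mu),\Nerve_*(0,\Z,0))$ agrees with the (ordinary, singular/simplicial) homology of the simplicial set $\Nerve_*(H,G,\mu)$. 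This is a purely simplicial fact about simplicial groups: for any simplicial group $\mathbb{G}_*$, the bicomplex $B(\mathbb{G}_*,\Z)$ obtained by applying the bar construction levelwise and totalising computes $H_*(\mathbb{G}_*)$, the homology of $\mathbb{G}_*$ viewed as a simplicial set. I would prove this by a spectral-sequence comparison, or more cleanly by noting that $B_*(G_p)\otm_{G_p}\Z\simeq \Z$ in a way natural enough to give a levelwise quasi-isomorphism of bicomplexes between $B(\mathbb{G}_*,\Z)$ and the bicomplex whose $p$th column is $\Z$ concentrated in degree $0$ — whose totalisation is exactly the normalised chain complex of the simplicial abelian group $\Z(\mathbb{G}_*)$, hence computes $H_*(\mathbb{G}_*)$.

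The main obstacle I anticipate is not the homological algebra but the bookkeeping of matching conventions: Baues and Ellis do not phrase their theory via this bicomplex, so one must pin down precisely which space/complex they use (Baues via a specific classifying space construction for crossed modules, Ellis via an explicit small free resolution) and check that it has the homotopy type of $|\Nerve_*(H,G,\mu)|$. The key external input is the standard fact that for a crossed module, its classifying space in the sense of \cite{Baues1991} is (weakly equivalent to) the geometric realisation of the nerve of its associated internal category in $\Gp$; once that is granted, the comparison is the simplicial lemma above. I would therefore organise the proof as: (i) reduce our $H_n(H,G,\mu)$ to $H_n$ of the simplicial set $\Nerve_*(H,G,\mu)$ via the degeneration of the spectral sequence / the levelwise quasi-isomorphism; (ii) quote the identification of $B(H,G,\mu)$ with $|\Nerve_*(H,G,\mu)|$ from \cite{Baues1991,Ellis1992}; (iii) conclude. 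If one prefers to avoid topology entirely, step (ii) can instead be replaced by directly matching Ellis's algebraic resolution with the bar-type resolution underlying $B(\Nerve_*(H,G,\mu),\Nerve_*(0,\Z,0))$, which is more computational but self-contained.
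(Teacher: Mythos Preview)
Your plan has a genuine gap at the core comparison step. The claimed levelwise quasi-isomorphism ``$B_*(G_p)\otimes_{G_p}\Z\simeq\Z$'' is false: $B_*(G_p)$ is a resolution of $\Z$ \emph{before} tensoring, but $B_*(G_p)\otimes_{G_p}\Z$ is the bar complex whose homology is $H_*(G_p;\Z)$, not $\Z$ concentrated in degree~$0$. Consequently your bicomplex does not collapse to the chain complex of the simplicial abelian group $\Z[\Nerve_*(H,G,\mu)]$, and indeed it cannot: the homology of $\Nerve_*(H,G,\mu)$ \emph{as a simplicial set} is not Baues--Ellis homology. Already in degree~$0$ the former gives $\Z[\pi_0(\Nerve_*)]=\Z[G/\mu(H)]$, whereas $H_0(H,G,\mu)=\Z$. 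The object you want is the \emph{classifying space} of the simplicial group, which is one loop-shift away from the underlying simplicial set; your sentence ``$B|\Nerve_*(H,G,\mu)|$, or equivalently the homology of the simplicial set underlying the nerve'' conflates the two.

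The fix is exactly what the paper does. Note that $B_m(G_p)\otimes_{G_p}\Z\cong\Z[G_p^{\,m}]$, so the bicomplex $B(\Nerve_*(H,G,\mu),\Nerve_*(0,\Z,0))$ is, on the nose, the alternating-sum double complex associated to the bisimplicial abelian group $\Z[\mathcal{N}_{**}(H,G,\mu)]$, where $\mathcal{N}_{**}$ is obtained by taking the nerve of the \emph{group} $\Nerve_p(H,G,\mu)$ in each simplicial degree~$p$. By the (generalised) Eilenberg--Zilber theorem the total complex has the same homology as the diagonal of $\Z[\mathcal{N}_{**}]$, and Ellis's Proposition~2 identifies the homology of that diagonal with Baues--Ellis homology. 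So the missing idea is not a spectral-sequence collapse to $\Z$, but the identification of your bicomplex with the free-abelian-group bisimplicial object built from the \emph{double} nerve, together with the Eilenberg--Zilber step from $\Tot$ to the diagonal.
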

\begin{proof}
For any (discrete) group, one can define its nerve by viewing it as a category with just one object. Let $\mathcal{N}_{**}(H, G, \mu)$ be the bisimplicial set obtained by forming the nerve of the group in each dimension of the simplicial group $\Nerve_*(H, G, \mu)$. Denote by $\Z\big(\mathcal{N}_{**}(H, G, \mu)\big)$ the bisimplicial abelian group obtained by forming the free abelian group pointwise in all degrees, i.e., $\Z\big(\mathcal{N}_{pq}(H, G, \mu)\big)$ is the free abelian group over $\mathcal{N}_{pq}(H, G, \mu)$ for each $p\geq 0$ and $q\geq 0$. Then for each $n\geq 0$, the $n$th homology group of the diagonal of $\Z\big(\mathcal{N}_{**}(H, G, \mu)\big)$ is isomorphic to the $n$th homology group of $(H, G, \mu)$ with integral coefficients defined by Baues and Ellis (see Proposition~2 in~\cite{Ellis1992}). Moreover, by taking the alternating sums of face homomorphisms of $\Z\big(\mathcal{N}_{**}(H, G, \mu)\big)$ in all vertical and horizontal directions, we regain the aforementioned bicomplex $B(\Nerve_*(H, G, \mu), \Nerve_*(0, \Z, 0))$. Whence the proposition.
\end{proof}

Let $G$ be a group acting on an abelian group $A$. Clearly then, the inclusion crossed module $(0, G, 0)$ acts on $(0, A, 0)$.

\begin{proposition} There is an isomorphism
\begin{align*}
H_n((0,G, 0),(0, A, 0))\cong H_n(G, A), \quad n\geq 0.
\end{align*}
\end{proposition}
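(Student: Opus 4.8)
The plan is to exploit that, with $H=0$ and $\mu=0$, both nerves degenerate into constant simplicial objects, which trivialises the defining bicomplex in the simplicial direction. First I would note that $\Nerve_n(0,G,0)=0\rtimes(\cdots(0\rtimes G)\cdots)=G$ for every $n$ and that, reading the face and degeneracy formulas of Section~\ref{sec1} with $H=0$, every structure homomorphism of $\Nerve_*(0,G,0)$ equals $1_G$; hence $\Nerve_*(0,G,0)$ is the constant simplicial group at $G$, and in the same way $\Nerve_*(0,A,0)$ is the constant simplicial abelian group at $A$. Therefore each column of $B(\Nerve_*(0,G,0),\Nerve_*(0,A,0))$ is the complex $B_*(G)\otimes_G A$, whose $n$th homology is by definition $H_n(G,A)$, while the horizontal differential from the column in simplicial degree $p$ to that in degree $p-1$ is the alternating sum $\sum_{i=0}^{p}(-1)^i$ of identity maps, i.e.\ the identity for $p$ even and zero for $p$ odd.

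I would then feed this into the first quadrant spectral sequence $E^1_{pq}=H_q(G_p,A_p)\Rightarrow H_{p+q}(G_*,A_*)$ recalled above. Here $E^1_{pq}=H_q(G,A)$ for all $p\geq0$, with $d^1$ alternately the identity map and the zero map, so a one-line computation along the rows gives $E^2_{pq}=H_q(G,A)$ for $p=0$ and $E^2_{pq}=0$ for $p\geq1$. The spectral sequence collapses at the second page, and its edge map yields the isomorphism $H_n((0,G,0),(0,A,0))=H_n(G_*,A_*)\cong H_n(G,A)$. One can also argue without spectral sequences: since the horizontal differential into the $p=0$ column vanishes, that column is a subcomplex of $\Tot B$ isomorphic to $B_*(G)\otimes_G A$, and the complementary quotient bicomplex---supported in columns $p\geq1$---splits as a direct sum of two-column strips $B_*(G)\otimes_G A\xleftarrow{\ 1\ }B_*(G)\otimes_G A$, each with acyclic total complex; hence the inclusion of the $p=0$ column is a quasi-isomorphism.

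I do not anticipate a real obstacle: this is a short degeneration argument rather than a substantial proof. The only points that call for a little care are the sign bookkeeping producing the alternating identity/zero pattern of the horizontal differential, and the (already built-in) fact that $B_*(G)\otimes_G A$ computes Eilenberg--Mac\,Lane homology; everything else is formal.
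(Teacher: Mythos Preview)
Your proposal is correct and follows essentially the same approach as the paper: both identify that the nerves are constant simplicial objects, so every column of the bicomplex is $B_*(G)\otimes_G A$ and the horizontal differentials alternate between $0$ and $1$, whence the total complex is quasi-isomorphic to the first column. The paper states this conclusion in one line, while you supply two equivalent justifications (the collapsing spectral sequence and the explicit splitting of the quotient into acyclic two-column strips); either is fine and neither adds anything beyond what the paper intends.
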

\begin{proof} The bicomplex $B(\Nerve_*(0,G, 0), \Nerve_*(0, A, 0))$ has the following form:
\[
\xymatrix{{} \ar[d] & {} \ar[d] & {} \ar[d]\\
B_2(G)\otm_{G}A \ar[d] & B_2(G)\otm_{G}A \ar[d] \ar[l]_-{0} & B_2(G)\otm_{G}A \ar[d] \ar[l]_-{1} & \cdots \ar[l]_-{0} \\
B_1(G)\otm_{G}A \ar[d] & B_1(G)\otm_{G}A \ar[d] \ar[l]_-{0} & B_1(G)\otm_{G}A \ar[d] \ar[l]_-{1} & \cdots \ar[l]_-{0} \\
B_0(G)\otm_{G}A & B_0(G)\otm_{G}A \ar[l]_-{0} & B_0(G)\otm_{G}A \ar[l]_-{1} &\cdots \ar[l]_-{0}}
\]
Therefore $\Tot B(\Nerve_*(0,G, 0), \Nerve_*(0, A, 0))$ is quasi-isomorphic to the first column of
$B(\Nerve_*(0,G, 0), \Nerve_*(0, A, 0))$. Whence the proposition.
\end{proof}

In what follows, let $(C, A, \nu)$ denote an abelian crossed module. We will write the operations in $A$ and $C$ additively.

\begin{proposition}\label{prop1} Let $(H, G, \mu)$ be a crossed module acting on $(C, A, \nu)$. Then
\begin{align*}
H_0((H, G, \mu), (C, A, \nu))= A/(\nu (C)+\langle G, A \rangle),
\end{align*}
where $\langle G, A \rangle$ denotes the subgroup of $A$ generated by all $^ga-a$ for $g\in G$ and $a\in A$.
\end{proposition}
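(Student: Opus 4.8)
The plan is to compute $H_0$ straight from the definition, since only the bottom-left corner of the bicomplex $B(\Nerve_*(H,G,\mu),\Nerve_*(C,A,\nu))$ is involved. Because $\Nerve_0(H,G,\mu)=G$ and $\Nerve_0(C,A,\nu)=A$, the total complex is $\Tot_0=B_0(G)\otm_G A$ in degree $0$, and, using $\Nerve_1(H,G,\mu)=H\rtimes G$ and $\Nerve_1(C,A,\nu)=C\rtimes A$,
\[
\Tot_1=\bigl(B_1(G)\otm_G A\bigr)\oplus\bigl(B_0(H\rtimes G)\otm_{H\rtimes G}(C\rtimes A)\bigr)
\]
in degree $1$. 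Since $H_0\bigl(\Tot B\bigr)=\Coker\bigl(\Tot_1\to\Tot_0\bigr)$, the whole task reduces to identifying the image of this differential inside $\Tot_0$.

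First I would record the canonical isomorphism $B_0(K)\otm_K M=\Z(K)\otm_K M\cong M$, $k\otm m\mapsto {}^{k^{-1}}m$, valid for any group $K$ acting on an abelian group $M$ (here $\Z(K)$ is free of rank one as a right $K$-module); in particular $\Tot_0\cong A$. Under this identification the restriction of the total differential to the summand $B_1(G)\otm_G A$ is ($\pm$) the ordinary bar differential, sending a generator $g_0\otm g_1\otm a$ to ${}^{g_1^{-1}g_0^{-1}}a-{}^{g_0^{-1}}a$; taking $g_0=1$ shows that the image of this summand is precisely $\langle G,A\rangle$, so that dividing out by it alone recovers $H_0(G,A)=A_G$. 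The restriction to the second summand is the alternating sum of face homomorphisms $d_1^0\otm\partial_1^0-d_1^1\otm\partial_1^1$; evaluating on a generator $(h,g)\otm(c,a)$ and inserting the degree-$1$ faces of the two nerves, namely $d_1^0(h,g)=g$, $d_1^1(h,g)=\mu(h)g$, $\partial_1^0(c,a)=a$ and $\partial_1^1(c,a)=\nu(c)+a$, produces the element ${}^{g^{-1}}a-{}^{g^{-1}\mu(h)^{-1}}(\nu(c)+a)$ of $A$, which modulo $\langle G,A\rangle$ is just $-\nu(c)$. Hence, as $(h,g,c)$ varies, the image of $\Tot_1\to\Tot_0\cong A$ is exactly $\nu(C)+\langle G,A\rangle$, and the proposition follows.

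There is no essential obstacle here: the argument is pure bookkeeping, and the only points demanding care are the right-module structure on $B_0$ (which fixes the precise shape of the identification $B_0(K)\otm_K M\cong M$), the fact that the $C$-coordinate of $\Nerve_1(C,A,\nu)=C\rtimes A$ enters the computation only through $\nu$, and keeping track of both face maps $\Nerve_1\to\Nerve_0$ of each nerve. One could instead deduce $H_0=E^2_{00}=\Coker\bigl(H_0(H\rtimes G,\,C\rtimes A)\to H_0(G,A)\bigr)$ from the spectral sequence $E^1_{pq}=H_q(G_p,A_p)\Rightarrow H_{p+q}(G_*,A_*)$, but identifying the coinvariants $(C\rtimes A)_{H\rtimes G}$ and the map induced by $d_1^0-d_1^1$ then reproduces the same calculation.
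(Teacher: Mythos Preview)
Your argument is correct. The paper, however, takes precisely the spectral-sequence route that you sketch in your final paragraph as an alternative: it works with the first-quadrant spectral sequence $E^1_{pq}=H_q(\Nerve_p(H,G,\mu),\Nerve_p(C,A,\nu))\Rightarrow H_{p+q}((H,G,\mu),(C,A,\nu))$, identifies $E^1_{00}=A/\langle G,A\rangle$ and $E^1_{10}=(C\rtimes A)/\langle H\rtimes G,C\rtimes A\rangle$, observes that the differential $E^1_{10}\to E^1_{00}$ is induced by $(c,a)\mapsto\nu(c)$, and concludes $H_0=E^2_{00}=A/(\nu(C)+\langle G,A\rangle)$. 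Your direct computation from $\Tot_1\to\Tot_0$ is the same calculation one level lower---you are taking cokernels of the vertical differential (which produces the $E^1$-row) and then of the horizontal one (the $d_1$ of the spectral sequence) by hand. What you gain is that you never have to invoke a spectral sequence for a degree-zero statement; what the paper's phrasing gains is uniformity with the later arguments (Propositions~\ref{prop2}, \ref{prop3}, etc.), which all run through the same $E^1$-page.
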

\begin{proof} We have a first quadrant spectral sequence
\begin{align*}
E^1_{pq}=H_q(\Nerve_p(H,G, \mu), \Nerve_p(C, A, \nu))\Rightarrow H_{p+q}((H,G, \mu), (C, A, \nu)).
\end{align*}
In particular
\begin{align*}
&E^1_{00}= H_0(G, A)=A/\langle G, A \rangle,\\
&E^1_{10}= H_0(H\rtimes G, C\rtimes A)=(C\rtimes A)/ \langle H\rtimes G, C\rtimes A \rangle ,
\end{align*}
and the differential $E^1_{10} \to E^1_{00}$ is induced by $(c, a)\mapsto \nu (c)$ for all $c\in C$ and $a\in A$. Thus
\begin{align*}
E^2_{00}=\Coker (E^1_{10} \to E^1_{00})=A/(\nu (C)+\langle G, A \rangle).
\end{align*}
Moreover $H_0((H, G, \mu), (C, A, \nu))= E^2_{00}$. Whence the proposition.
\end{proof}

\begin{proposition}\label{prop2} Let $(H, G, \mu)$ be a crossed module. Then
\begin{align*}
H_1(H, G, \mu)= G/(\mu (H)\cdot [G, G]).
\end{align*}
\end{proposition}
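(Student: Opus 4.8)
The plan is to compute $H_1(H,G,\mu) = H_1((H,G,\mu),(0,\Z,0))$ directly from the first-quadrant spectral sequence
\[
E^1_{pq}=H_q\bigl(\Nerve_p(H,G,\mu),\Nerve_p(0,\Z,0)\bigr)\Rightarrow H_{p+q}((H,G,\mu),(0,\Z,0)),
\]
exactly as in the proof of Proposition~\ref{prop1}, but now carrying the computation one degree further. Since $\Nerve_p(0,\Z,0)=\Z$ with trivial action, the entries $E^1_{pq}$ are the Eilenberg--Mac\,Lane homology groups $H_q(\Nerve_p(H,G,\mu),\Z)$ of the groups $\Nerve_p(H,G,\mu)=H\rtimes(\cdots(H\rtimes G)\cdots)$. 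The total degree $1$ part of the abutment receives contributions from $E^\infty_{10}$ and $E^\infty_{01}$, so I must identify these two pieces and the relevant differentials.

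First I would compute the bottom row $q=0$. Here $E^1_{p0}=H_0(\Nerve_p(H,G,\mu),\Z)=\Z$ for every $p$, and the horizontal differential is the alternating sum of the maps induced on $H_0$ by the face homomorphisms of the nerve; each such induced map is the identity $\Z\to\Z$, so the bottom row is the complex $\cdots\to\Z\xrightarrow{0}\Z\xrightarrow{1}\Z\xrightarrow{0}\Z$, whose homology is $\Z$ in degree $0$ and $0$ in degree $1$. Hence $E^2_{10}=E^\infty_{10}=0$, and the degree-$1$ abutment is therefore $E^\infty_{01}$, a subquotient of $E^1_{01}=H_1(G,\Z)=G^{\mathrm{ab}}=G/[G,G]$. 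Next I would analyse the column $p=0$ and the differentials touching the $(0,1)$ spot: the incoming differential $d^2\colon E^2_{20}\to E^2_{01}$ (note $E^2_{20}$ is a subquotient of the bottom row, hence $0$, so this vanishes), and the outgoing differential $d^1\colon E^1_{11}\to E^1_{01}$. The cokernel of this $d^1$ is $E^2_{01}=E^\infty_{01}$. So the crux is to show that the image of $d^1\colon H_1(H\rtimes G,\Z)\to H_1(G,\Z)$ is precisely the image of $\mu(H)$ in $G/[G,G]$, which would give $E^\infty_{01}=G/(\mu(H)\cdot[G,G])$ as required. Here $d^1$ is the alternating sum $\pa^1_0-\pa^1_1$ of the maps induced on $H_1(-,\Z)=(-)^{\mathrm{ab}}$ by the two face maps $d_0,d_1\colon H\rtimes G\rightrightarrows G$, which are $(h,g)\mapsto g$ and $(h,g)\mapsto\mu(h)g$ respectively; so $d^1$ sends the class of $(h,g)$ to $\overline{\mu(h)g}-\overline{g}=\overline{\mu(h)}$ in $G/[G,G]$, and its image is exactly the subgroup generated by $\overline{\mu(H)}$, i.e. $\mu(H)\cdot[G,G]/[G,G]$.

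I expect the main obstacle to be bookkeeping rather than conceptual: one must be careful that the spectral sequence genuinely degenerates enough in total degree $\le 1$ for the two-step filtration to collapse to the single quotient $E^\infty_{01}$, i.e. that there are no surviving higher differentials into or out of the $(0,1)$ and $(1,0)$ positions and no extension problem. The extension issue is trivial here since $E^\infty_{10}=0$, so $H_1$ equals $E^\infty_{01}$ on the nose; and all potentially relevant higher differentials originate from or land in bottom-row entries $E^r_{p0}$ with $p\ge 2$, which are already killed at the $E^2$ page by the computation of the bottom-row homology above. Assembling these observations yields $H_1(H,G,\mu)\cong G/(\mu(H)\cdot[G,G])$, which completes the proof.
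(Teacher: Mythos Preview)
Your proposal is correct and follows essentially the same route as the paper's own proof: both use the first-quadrant spectral sequence $E^1_{pq}=H_q(\Nerve_p(H,G,\mu),\Z)\Rightarrow H_{p+q}(H,G,\mu)$, observe that the bottom row $E^1_{p0}=\Z$ satisfies $E^2_{p0}=0$ for $p\geq 1$ so that $H_1=E^2_{01}$, and then identify $E^2_{01}$ as the cokernel of the map $(H\rtimes G)^{\mathrm{ab}}\to G^{\mathrm{ab}}$ induced by $(h,g)\mapsto\mu(h)$. Your write-up is in fact slightly more explicit than the paper's, since you spell out the bottom-row complex $\cdots\to\Z\xrightarrow{0}\Z\xrightarrow{1}\Z\xrightarrow{0}\Z$ and check the vanishing of $E^2_{20}$ and of the extension problem; the paper simply asserts the analogous statements.
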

\begin{proof} We have a first quadrant spectral sequence
\begin{align*}
E^1_{pq}=H_q(\Nerve_p(H,G, \mu), \Z) \Rightarrow H_{p+q}(H,G, \mu).
\end{align*}
Thus $E^1_{0q}=\Z$ for $q\geq 0$ and $E^2_{0q}=0$ for $q\geq 1$. This implies that
\begin{align*}
H_{1}(H,G, \mu)=E^2_{01}.
\end{align*}
Since the first homology of a group with integral coefficients is isomorphic to the abelianisation of the same group,
we have:
\begin{align*}
E^1_{01}&= G/[G, G],\\
E^1_{11}&= (H\rtimes G)/[H\rtimes G, H\rtimes G],
\end{align*}
and the differential $E^1_{11} \to E^1_{01}$ is induced by $(h,g)\mapsto \mu(h)$. Thus
\begin{align*}
E^2_{01}= \Coker (E^1_{11} \to E^1_{01}) = G/(\mu(H)\cdot [G, G]),
\end{align*}
which finishes the proof.
\end{proof}

\begin{proposition} For any short exact sequence of $(H, G, \mu)$-modules
\begin{align*}
0\to (C, A, \nu)\to (C', A', \nu')\to (C'', A'', \nu'')\to 0,
\end{align*}
there is a long exact sequence of homology groups
\begin{align*}
&\cdots \to H_n((H, G, \mu), (C, A, \nu))\to H_n((H, G, \mu), (C', A', \nu'))\to \\
&H_n((H, G, \mu), (C'', A'', \nu''))\to H_{n-1}((H, G, \mu), (C, A, \nu))\to \cdots .
\end{align*}
\end{proposition}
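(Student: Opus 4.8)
The plan is to realise the stated long exact sequence as the one attached to a short exact sequence of chain complexes of abelian groups, produced by applying the bicomplex construction $B(\Nerve_*(H,G,\mu),-)$ to the given short exact sequence of coefficient modules.

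First I would observe that, forgetting the structure maps to $(H,G,\mu)$, a short exact sequence of $(H,G,\mu)$-modules has an underlying short exact sequence of abelian crossed modules $0\rra(C,A,\nu)\rra(C',A',\nu')\rra(C'',A'',\nu'')\rra0$, still equipped with a compatible action of $(H,G,\mu)$. Applying the nerve functor---which is exact---and using that the nerve of an abelian crossed module is an abelian simplicial group whose term in degree $n$ is $C^n\times A$ (and similarly for the primed modules), one obtains for each $n\geq0$ a short exact sequence of abelian groups
\[
0\rra\Nerve_n(C,A,\nu)\rra\Nerve_n(C',A',\nu')\rra\Nerve_n(C'',A'',\nu'')\rra0,
\]
natural in $n$ and equivariant for the action of the simplicial group $G_*:=\Nerve_*(H,G,\mu)$; in each simplicial degree this is a short exact sequence of $G_n$-modules.

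Next, for fixed $m$ and $n$, I would apply the functor $B_m(G_n)\otm_{G_n}-$. Since $B_m(G_n)$ is a free right $\Z(G_n)$-module---its right action differs from the regular one only through the inversion automorphism of $G_n$ on the first tensor factor---this functor is exact, so it preserves the short exactness above. Letting $m$ and $n$ vary, these sequences assemble into a short exact sequence of bicomplexes
\[
0\rra B(G_*,\Nerve_*(C,A,\nu))\rra B(G_*,\Nerve_*(C',A',\nu'))\rra B(G_*,\Nerve_*(C'',A'',\nu''))\rra0,
\]
the horizontal and vertical differentials being respected because the three comparison maps are morphisms of $G_*$-equivariant simplicial abelian groups over the identity on $G_*$. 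Forming total complexes is exact---in each total degree $\Tot$ is a finite direct sum, the bicomplex being concentrated in the first quadrant---so this yields a short exact sequence of chain complexes, whose associated long exact homology sequence, together with its natural connecting homomorphisms, is exactly the claimed sequence by the definition of $H_n((H,G,\mu),-)$.

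I do not expect a genuine obstacle here; the proof is the standard ``short exact sequence of complexes'' mechanism. The only two points deserving an explicit word are the flatness (indeed freeness) of $B_m(G_n)$ as a right $\Z(G_n)$-module, which makes the bicomplex construction exact in the coefficient variable, and the fact that exactness of a first-quadrant sequence of bicomplexes is inherited by the associated total complexes---both entirely routine.
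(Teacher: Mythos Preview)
Your proposal is correct and follows exactly the same approach as the paper's proof: both obtain the long exact sequence from the short exact sequence of bicomplexes $0\to B(\Nerve_*(H,G,\mu),\Nerve_*(C,A,\nu))\to\cdots\to B(\Nerve_*(H,G,\mu),\Nerve_*(C'',A'',\nu''))\to 0$ and its total complex. The paper simply asserts that this sequence of bicomplexes is short exact (``it is easy to see''), whereas you spell out the reasons---exactness of the nerve, freeness of $B_m(G_n)$ as a right $\Z(G_n)$-module, and exactness of $\Tot$ on first-quadrant bicomplexes---but the route is the same.
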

\begin{proof} It is easy to see that the short exact sequence given in the proposition yields the following
short exact sequence of bicomplexes:
\begin{align*}
&0\to B(\Nerve_*(H, G, \mu), \Nerve_*(C, A, \nu))\to B(\Nerve_*(H, G, \mu), \Nerve_*(C', A', \nu'))\to \\
& \to B(\Nerve_*(H, G, \mu), \Nerve_*(C'', A'', \nu''))\to 0.
\end{align*}
The corresponding long exact homology sequence is the one claimed.
\end{proof}

For any simplicial group $G_*$ and any functor $T$ from the category of groups to the category of abelian groups,
let $T(G_*)$ denote the abelian simplicial group obtained by applying the functor $T$ dimension-wise to $G_*$.
The following lemma was offered to us by Nick Inassaridze.

\begin{lemma}\label{lemma1} Let $(G_*, d_0^0, G)$ be an augmented aspherical simplicial group. Then $(\Z(G_*), \Z(d_0^0), \Z(G))$
is also aspherical.
\end{lemma}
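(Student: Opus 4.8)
The plan is to exploit the fact that $\Z(-)$ applied to a simplicial group computes homology, so that asphericity translates into a statement about the homology of the augmented complex. First I would recall the key classical fact: for a group $K$, the simplicial abelian group $\Z(\Nerve_*(K))$ — or more to the point, $\Z$ applied to any simplicial set — has the property that $\pi_n$ of it agrees with the reduced homology of the realization; but here we are applying $\Z(-)$ levelwise to the simplicial group $G_*$ itself, viewed as a simplicial set, not taking nerves. So the cleaner route is: $(G_*, d_0^0, G)$ aspherical means that, as an augmented simplicial \emph{set}, the map $|G_*| \to G$ is a weak equivalence — indeed asphericity of the augmented simplicial group says $\pi_n = 0$ for $n \geq 1$ and $\pi_0(G_*, d_0^0, G) = 0$ together with $\Im d_0^0 = G$, which is exactly the statement that the augmented simplicial set $G_*$ is ``contractible onto $G$'', i.e. the augmentation is a simplicial homotopy equivalence after forgetting the group structure, or at least a weak equivalence.

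The central step is then to invoke that $\Z(-)$, the free abelian group functor, sends weak equivalences of simplicial sets to weak equivalences of simplicial abelian groups (it is a left Quillen functor for the standard model structures; equivalently, $\pi_n \Z(X_*) = \widetilde{H}_n(X_*)$ reduced homology, and homology is a homotopy invariant). Applying this to the augmented simplicial set underlying $(G_*, d_0^0, G)$ gives that $(\Z(G_*), \Z(d_0^0), \Z(G))$ is a weak equivalence of augmented simplicial abelian groups, which is precisely the assertion that it is aspherical in the sense defined in the excerpt (all homotopy groups of the augmented object vanish, and $\Im \Z(d_0^0) = \Z(G)$ since $d_0^0$ is already surjective and $\Z(-)$ preserves surjections).

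Alternatively — and this may be the version actually wanted, since it avoids model-category language and stays inside homological algebra — I would argue via an explicit contracting homotopy. Asphericity of an augmented simplicial group is often established by producing an ``extra degeneracy'' or a contracting homotopy $s_{-1}\colon G_n \to G_{n+1}$ for the underlying augmented simplicial set (this is automatic, e.g., when $G_*$ is the nerve-type resolution coming from an adjunction, as will be the case in the applications in Section~\ref{Section Left Derived Functors}). Such a contracting homotopy is a collection of \emph{set maps}, hence is preserved verbatim by applying $\Z(-)$ pointwise, and a contracting homotopy on $\Z(G_*)$ forces $\pi_n(\Z(G_*), \Z(d_0^0), \Z(G)) = 0$ for all $n$ by the standard argument. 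The main obstacle is the logical gap between the two notions of asphericity: ``$\pi_n = 0$'' for a simplicial \emph{group} is stated via the Moore complex, whereas for $\Z(G_*)$ we want $\pi_n = 0$ via the Moore complex of the simplicial abelian group; one must be careful that these agree with the simplicial-set homotopy groups (true for simplicial groups and simplicial abelian groups, as both are fibrant), and that asphericity as defined — vanishing of $\pi_n$ plus surjectivity of the augmentation onto $G$ — is genuinely equivalent to the augmented simplicial set being acyclic in reduced homology. Pinning down which of these equivalent characterizations to use, and whether a contracting homotopy is available in the intended application or only a weak equivalence, is the real content; the rest is formal.
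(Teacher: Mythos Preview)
Your second alternative is exactly what the paper does. The paper's entire proof is: asphericity of $(G_*, d_0^0, G)$ guarantees a \emph{left contraction}, i.e.\ a family of set maps $h_n\colon G_n\to G_{n+1}$ for $n\geq -1$ (not assumed to be homomorphisms), and then one applies $\Z(-)$ to these maps to get a left contraction of $(\Z(G_*),\Z(d_0^0),\Z(G))$, whence asphericity. The point you were uncertain about---whether a contracting homotopy can be extracted from mere asphericity in general, not just in the cotriple-resolution case---is dispatched by the paper with a citation to Inassaridze's book~\cite{IH}: every aspherical augmented simplicial group admits such a left contraction at the level of underlying sets. So your ``main obstacle'' is a genuine one, but the answer is simply that the implication \emph{aspherical $\Rightarrow$ left-contractible} holds for simplicial groups, and the paper outsources it.

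Your first route (Moore homotopy of a simplicial group agrees with simplicial-set homotopy because simplicial groups are fibrant; $\Z(-)$ sends weak equivalences of simplicial sets to quasi-isomorphisms since $\pi_n\Z(X_*)\cong H_n(X_*;\Z)$) is also correct and is a genuinely different argument. It avoids the need to know the contractibility result from~\cite{IH}, at the cost of invoking the identification of Moore and simplicial homotopy groups and the homotopy invariance of simplicial homology. The paper's argument is more elementary and self-contained once~\cite{IH} is granted; yours is perhaps more conceptual and would be the natural proof for a reader fluent in simplicial homotopy theory.
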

\begin{proof} 
We write $G_{-1}=G$. Since $(G_*, d_0^0, G)$ is aspherical, there is a sequence of maps (which need not be homomorphisms) $(h_n\colon G_n\to G_{n+1})_{n\geq -1}$ which forms a \emph{left contraction} for $(G_*, d_0^0, G)$; see~\cite{IH} for more details. Then the sequence $(\Z(h_n)\colon \Z(G_n)\to \Z(G_{n+1}))_{n\geq -1}$ will be a left contraction for $(\Z(G_*), \Z(d_0^0), \Z(G))$. Hence $(\Z(G_*), \Z(d_0^0), \Z(G))$
is aspherical.
\end{proof}

Let $G_*$ be a simplicial group acting on an abelian simplicial group $A_*$. For each $m\geq 0$, let $B_m(G_*, A_*)$ denote
the following abelian simplicial group:
\[
\xymatrix{\cdots \ar@<1.5ex>[r] \ar@<-1.5ex>[r] \ar@<.5ex>[r] \ar@<-.5ex>[r] & B_{m}(G_2)\otimes_{G_2} A_2 \ar@<1ex>[r] \ar[r] \ar@<-1ex>[r] & B_{m}(G_1)\otimes_{G_1} A_1 \ar@<.5ex>[r] \ar@<-.5ex>[r] & B_{m}(G_0)\otimes_{G_0} A_0.}
\]

\begin{lemma}\label{lemma2} Let $(G_*, d_0^0, G)$ be an augmented aspherical simplicial group. Then there is an isomorphism
\begin{align*}
\pi_n(B_m(G_*, A_*))\cong B_{m-1}(G)\otimes \pi_n (A_*).
\end{align*}
\end{lemma}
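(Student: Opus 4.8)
The plan is to exploit the description of $B_m(G_*, A_*)$ as the simplicial abelian group whose $n$-simplices are $B_m(G_n)\otimes_{G_n} A_n$, and to recognise this, up to the coefficient group $B_{m-1}(G)$, as a simplicial abelian group computing a Tor-like invariant of the action. First I would fix $m$ and observe that, since $B_m(G_n)$ is a \emph{free} right $\Z(G_n)$-module, the functor $B_m(-)\otimes_{-} -$ applied dimensionwise is exact in a suitable sense; more precisely, choosing a basis of $B_m(G_n)$ compatible across dimensions, one sees that $B_m(G_n)\otimes_{G_n} A_n$ is a direct sum of copies of $A_n$ indexed by $G_n^{\,m}$ (the non-$g_0$ tensor factors), and the last variable $g_0$ has been absorbed by the tensor-over-$G_n$. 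The point of the right $G$-action recalled in the excerpt, $(g_0\otimes\cdots\otimes g_m)g = g^{-1}g_0\otimes\cdots\otimes g_m$, is precisely that tensoring over $G_n$ identifies $B_m(G_n)\otimes_{G_n} A_n$ with $B_{m-1}(G_n)\otimes A_n$ as abelian groups — here $B_{-1}(G_n)=\Z$ so that for $m=0$ this reads $\Z\otimes A_n = A_n$.

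Next I would track the simplicial structure through this identification. The face maps of $B_m(G_*, A_*)$ are the $b\otimes a\mapsto d_i(b)\otimes\partial^i(a)$ from the bicomplex construction. Under the iso $B_m(G_n)\otimes_{G_n}A_n\cong B_{m-1}(G_n)\otimes A_n$, the face operators act diagonally: on the $B_{m-1}(G_n)$ factor they are induced by the simplicial maps $d_i\colon G_n\to G_{n-1}$ applied entrywise to $(g_1,\dots,g_m)$ (i.e.\ functorially via $B_{m-1}$), and on $A_n$ they are the $\partial^i$. Because $(G_*,d_0^0,G)$ is aspherical, Lemma~\ref{lemma1} gives that $(\Z(G_*),\Z(d_0^0),\Z(G))$ is aspherical too, hence each $\Z(G_*)^{\otimes(m)}=\Z(G_*)\otimes\cdots\otimes\Z(G_*)$ (with $m$ factors, diagonal simplicial structure) is aspherical with augmentation to $\Z(G)^{\otimes m}=B_{m-1}(G)$; concretely a left contraction on $\Z(G_*)$ induces one on the tensor power. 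So $\pi_n$ of the simplicial abelian group $B_{m-1}(G_*)$ is $B_{m-1}(G)$ concentrated in degree $0$.

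Then I would finish by an Eilenberg--Zilber / Künneth argument: $B_m(G_*, A_*)\cong B_{m-1}(G_*)\otimes A_*$ as simplicial abelian groups (diagonal structure), where $B_{m-1}(G_*)$ is the aspherical simplicial abelian group just discussed and $A_*$ is the given abelian simplicial group. The normalisation of a levelwise tensor product is chain-homotopy equivalent to the tensor product of normalisations, and since $\Moore(B_{m-1}(G_*))$ is quasi-isomorphic to $B_{m-1}(G)$ in degree $0$ and consists of free abelian groups, the Künneth spectral sequence degenerates, giving $\pi_n\big(B_{m-1}(G_*)\otimes A_*\big)\cong B_{m-1}(G)\otimes\pi_n(A_*)$. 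Combining the identifications yields $\pi_n(B_m(G_*, A_*))\cong B_{m-1}(G)\otimes\pi_n(A_*)$, as claimed.

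The main obstacle I anticipate is not any single deep fact but the bookkeeping in the first two steps: one must check carefully that the identification $B_m(G_n)\otimes_{G_n}A_n\cong B_{m-1}(G_n)\otimes A_n$ is natural in $n$ for \emph{all} the face and degeneracy operators simultaneously (the twisting by the $G_n$-action on both sides has to cancel consistently), and that the resulting diagonal simplicial structure really is the tensor product structure on $B_{m-1}(G_*)\otimes A_*$ rather than something only levelwise isomorphic to it. Once that naturality is pinned down, the homological input — asphericity via Lemma~\ref{lemma1}, freeness of the $B_{m-1}(G_n)$, and the Eilenberg--Zilber theorem — is standard.
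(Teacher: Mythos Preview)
Your argument is correct and follows essentially the same route as the paper: the identification $B_m(G_n)\otimes_{G_n}A_n\cong B_{m-1}(G_n)\otimes A_n$, recognition of the resulting simplicial abelian group as the diagonal of the bisimplicial group $B_{m-1}(G_p)\otimes A_q$, and then Eilenberg--Zilber plus K\"unneth together with the asphericity of $B_{m-1}(G_*)$ (obtained from Lemma~\ref{lemma1}). The paper is terser about the naturality and the vanishing of the Tor terms (which holds because $B_{m-1}(G)$ is free abelian), but the content is the same.
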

\begin{proof} For each $m\geq 0$ and $k\geq 0$ we have an isomorphism of abelian groups:
\begin{align*}
B_{m}(G_k)\otimes_{G_k} A_k \cong B_{m-1}(G_k)\otimes A_k.
\end{align*}
Thus, we need to calculate the $n$th homotopy group of the following simplicial abelian group:
\[
\xymatrix{\cdots \ar@<1.5ex>[r] \ar@<-1.5ex>[r] \ar@<.5ex>[r] \ar@<-.5ex>[r] & B_{m-1}(G_2)\otimes A_2 \ar@<1ex>[r] \ar[r] \ar@<-1ex>[r] & B_{m-1}(G_1)\otimes A_1 \ar@<.5ex>[r] \ar@<-.5ex>[r] & B_{m-1}(G_0)\otimes A_0.}
\]
This latter simplicial group is the diagonal of the bisimplicial group
\[
\xymatrix{& {} \ar@<1.5ex>[d] \ar@<-1.5ex>[d]\ar@<.5ex>[d] \ar@<-.5ex>[d] & {} \ar@<1.5ex>[d] \ar@<-1.5ex>[d]\ar@<.5ex>[d] \ar@<-.5ex>[d] & {} \ar@<1.5ex>[d] \ar@<-1.5ex>[d]\ar@<.5ex>[d] \ar@<-.5ex>[d] \\
\cdots \ar@<1.5ex>[r] \ar@<-1.5ex>[r]\ar@<.5ex>[r] \ar@<-.5ex>[r] & B_{m-1}(G_2)\otimes A_2 \ar@<1ex>[d] \ar[d] \ar@<-1ex>[d] \ar@<1ex>[r] \ar[r] \ar@<-1ex>[r] & B_{m-1}(G_1)\otimes A_2 \ar@<.5ex>[r] \ar@<-.5ex>[r] \ar@<1ex>[d] \ar[d] \ar@<-1ex>[d] & B_{m-1}(G_0)\otimes A_2 \ar@<1ex>[d] \ar[d] \ar@<-1ex>[d] \\
\cdots \ar@<1.5ex>[r] \ar@<-1.5ex>[r] \ar@<.5ex>[r] \ar@<-.5ex>[r] & B_{m-1}(G_2)\otimes A_1 \ar@<.5ex>[d] \ar@<-.5ex>[d] \ar@<1ex>[r] \ar[r] \ar@<-1ex>[r] & B_{m-1}(G_1)\otimes A_1 \ar@<.5ex>[r] \ar@<-.5ex>[r] \ar@<.5ex>[d] \ar@<-.5ex>[d] & B_{m-1}(G_0)\otimes A_1 \ar@<.5ex>[d] \ar@<-.5ex>[d] \\
\cdots \ar@<1.5ex>[r] \ar@<-1.5ex>[r] \ar@<.5ex>[r] \ar@<-.5ex>[r] & B_{m-1}(G_2)\otimes A_0 \ar@<1ex>[r] \ar[r] \ar@<-1ex>[r] & B_{m-1}(G_1)\otimes A_0 \ar@<.5ex>[r] \ar@<-.5ex>[r] & B_{m-1}(G_0)\otimes A_0.}
\]
Thus, by the Eilenberg--Zilber theorem~\cite{Weibel} and the K\"unneth formula~\cite{MacLane:Homology} we get:
\begin{align*}
\pi_n(B_m(G_*, A_*))= \underset{i+j=n}{\bigoplus}\pi_i (B_{m-1}(G_*))\otimes \pi_j (A_*).
\end{align*}
Moreover, by Lemma~\ref{lemma1} the augmented simplicial group
\[
(B_{m-1}(G_*), B_{m-1}(d_0^0), B_{m-1}(G))
\]
is aspherical. Whence the lemma.
\end{proof}

Given a group $G$ and a normal subgroup $N\normal G$, let $\sigma \colon N\to G$ be the natural inclusion. Suppose that $(N, G, \sigma)$ acts on an
abelian crossed module $(C, A, \nu)$. Recall that both $\Ker \nu$ and $A/\Im \nu$ have a $G/N$-module structure (see Section~\ref{sec1}). Thus, the homology groups $H_n(G/N, \Ker\nu)$ and $H_n(G/N, A/\Im\nu)$, for $n\geq 0$, discussed in the next proposition make sense.

\begin{proposition}\label{prop3} There is a long exact sequence
\begin{align*}
&\cdots \to H_{n-1}(G/N, \Ker \nu) \to H_n((N, G, \sigma), (C, A, \nu))\to H_n(G/N, A/\Im\nu)\\
&\to H_{n-2}(G/N, \Ker \nu) \to \cdots
\end{align*}
\end{proposition}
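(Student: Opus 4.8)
The plan is to produce the long exact sequence as the homology long exact sequence associated with a short exact sequence of total complexes, after first identifying the two outer terms. First I would exploit the hypothesis that $(N,G,\sigma)$ is an inclusion crossed module, so that its nerve $\Nerve_*(N,G,\sigma)$ is an augmented simplicial group over $G/N$ whose augmentation $\Nerve_*(N,G,\sigma)\to G/N$ is aspherical: indeed $\pi_0\cong G/\Im\sigma = G/N$ and $\pi_1\cong\Ker\sigma = 0$ by~\eqref{equ1}, and all higher homotopy groups vanish since the nerve of a crossed module has Moore complex concentrated in degrees $0$ and $1$. Thus $(\Nerve_*(N,G,\sigma),d_0^0,G/N)$ is augmented aspherical, which puts us exactly in the situation of Lemmas~\ref{lemma1} and~\ref{lemma2}.

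Next I would analyse the bicomplex $B(\Nerve_*(N,G,\sigma),\Nerve_*(C,A,\nu))$ by filtering it in the direction coming from the simplicial structure of the nerves, i.e.\ computing first the homology of each row $B_m(\Nerve_*(N,G,\sigma),\Nerve_*(C,A,\nu))$ in the sense of the simplicial abelian group of Lemma~\ref{lemma2}. By that lemma,
\begin{align*}
\pi_n\bigl(B_m(\Nerve_*(N,G,\sigma),\Nerve_*(C,A,\nu))\bigr)\cong B_{m-1}(G/N)\otimes \pi_n(\Nerve_*(C,A,\nu)),
\end{align*}
and by~\eqref{equ1} applied to the abelian crossed module $(C,A,\nu)$ we have $\pi_1(\Nerve_*(C,A,\nu))\cong\Ker\nu$, $\pi_0(\Nerve_*(C,A,\nu))\cong A/\Im\nu$, and $\pi_n=0$ for $n\geq 2$. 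Hence the only nonzero rows of the resulting $E^1$-page (or, equivalently, the homology of the associated graded) are in simplicial degrees $0$ and $1$, where they compute $B_{m-1}(G/N)\otimes(A/\Im\nu)$ and $B_{m-1}(G/N)\otimes\Ker\nu$ respectively. Taking homology in the remaining ($B_*$-)direction then yields $H_n(G/N,A/\Im\nu)$ along the bottom and $H_n(G/N,\Ker\nu)$ along the next line, since $B_*(G/N)$ is the standard resolution computing group homology (one checks the induced $G/N$-actions on $\Ker\nu$ and $A/\Im\nu$ are the ones described at the end of Section~\ref{sec1}).

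With only two nonzero columns (simplicial degrees $0$ and $1$) in this spectral sequence, the $E^2$-page degenerates into a long exact sequence via the standard two-column argument: the differential $d^2$ runs from the degree-$0$ line to the degree-$1$ line with a shift, giving precisely
\begin{align*}
\cdots\to H_{n-1}(G/N,\Ker\nu)\to H_n((N,G,\sigma),(C,A,\nu))\to H_n(G/N,A/\Im\nu)\to H_{n-2}(G/N,\Ker\nu)\to\cdots,
\end{align*}
where the degree shift by $2$ in the connecting map reflects the total degree $=$ (simplicial degree) $+$ ($B_*$-degree) convention, the $\Ker\nu$ term sitting in simplicial degree $1$. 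I expect the main obstacle to be the bookkeeping that makes the two-column collapse rigorous: one must be careful that taking homology row-wise and then column-wise is legitimate (the rows $B_m(\Nerve_*,\Nerve_*)$ are genuinely the simplicial objects of Lemma~\ref{lemma2}, and the $B_*$-differentials are compatible), identify the $d^2$ differential honestly as the connecting homomorphism rather than merely asserting its existence, and confirm that no further differentials or extension problems intervene — which they cannot, precisely because at most two adjacent columns are nonzero. A clean way to package this is to realise the whole construction as a short exact sequence of total complexes (coming from the truncation of $\Nerve_*(C,A,\nu)$ that splits off $\pi_1$ from $\pi_0$, using asphericity via Lemmas~\ref{lemma1}--\ref{lemma2} to control the Moore complexes) and quote the associated long exact homology sequence.
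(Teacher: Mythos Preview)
Your proposal is correct and follows essentially the same route as the paper: filter the defining bicomplex so as to first take homotopy of the simplicial abelian groups $B_m(\Nerve_*(N,G,\sigma),\Nerve_*(C,A,\nu))$, apply Lemma~\ref{lemma2} (using asphericity of the nerve of an inclusion crossed module) together with~\eqref{equ1} to see that the $E^1$-page is concentrated in simplicial degrees $0$ and $1$, compute $E^2$ as group homology of $G/N$ with coefficients in $A/\Im\nu$ and $\Ker\nu$, and read off the long exact sequence from the two-line collapse. The paper's write-up is terser and stops at ``this implies the proposition'' once the two nonzero $E^2$-lines are identified, but the argument is the same.
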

\begin{proof} We have a first quadrant spectral sequence
\begin{align*}
E^1_{pq}= H_q \big(B_p(\Nerve_*(N, G, \sigma),\Nerve_*(C, A, \nu))\big) \Rightarrow H_{p+q}((N, G, \sigma),(C, A, \nu)).
\end{align*}
By Lemma~\ref{lemma2} and~\eqref{equ1} we have:
\begin{align*}
E^1_{pq} =
\begin{cases}
0\qquad & \text{when $q\geq 2$,}\\
B_{p-1}(G/N)\otimes \Ker\nu & \text{when $q=1$,} \\
B_{p-1}(G/N)\otimes (A/\Im\nu) & \text{when $q=0$.}
\end{cases}
\end{align*}
Furthermore,
\begin{align*}
E^2_{p1}&=H_p(B_{*-1}(G/N)\otimes \Ker\nu)=H_p(B_{*}(G/N)\otimes_{G/N} \Ker\nu)=H_p(G/N, \Ker\mu), \\
E^2_{p0}&=H_p(B_{*-1}(G/N)\otimes (A/\Im\nu))=H_p(B_{*}(G/N)\otimes_{G/N} (A/\Im\nu)) \\
& =H_p(G/N, (A/\Im\nu)).
\end{align*}
This implies the proposition.
\end{proof}

It is well known that the category of crossed modules $\XMod$ may be considered as a \emph{variety of $\Omega$-groups} in the sense of Higgins~\cite{Higgins}. As explained in~\cite{Carrasco-Homology}, the forgetful functor $\U\colon \XMod\to \Set$ to the category of sets assigns, to a crossed module $(H, G, \mu)$, the cartesian product of the underlying sets of the groups $H$ and~$G$. Its left adjoint is constructed as follows. Given a group $G$, let $G+G$ be the coproduct (=~free product) of $G$ with itself, with injections $\iota_1$, $\iota_{2}\colon G\to G+G$, and let $i_{G}\colon {\overline{G}\to G+G}$ be the the normal closure of $\iota_{1}$, which may be obtained as the kernel of the retraction ${\lgroup 0\; 1_{G}\rgroup\colon G+G\to G}$ of $\iota_{2}$. This determines an inclusion crossed module
$(\overline{G}, G+G, i_{G})$. The functor $\F\colon \Set\to\XMod$ assigns, to any set~$X$, the inclusion crossed module
$(\overline{F_{X}}, F_{X}+F_{X}, i_{F_{X}})$, where $F_{X}$ is the free group over~$X$.

\begin{corollary}\label{cor} Let $(R, F, \eta)$ be a crossed module, free over $\Set$, acting on an abelian crossed module $(C, A, \mu)$. Then
\[
\begin{cases}
H_{n}((R, F, \eta), (C, A,\nu))=0, & n\geq 3, \\
H_{n}(R, F, \eta)=0, & n\geq 2.
\end{cases}
\]
\end{corollary}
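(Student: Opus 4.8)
The plan is to exploit the fact that a crossed module free over $\Set$ is, by the construction of $\F$ recalled just above, an \emph{inclusion} crossed module whose quotient group is \emph{free}, and then to read off the conclusion from Proposition~\ref{prop3}.

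First I would note that, up to isomorphism, $(R,F,\eta)=(\overline{F_X},F_X+F_X,i_{F_X})$ for a set $X$, and that this is exactly the inclusion crossed module of the normal subgroup $\overline{F_X}\normal F_X+F_X$; the retraction of $\iota_2$, whose kernel is $\overline{F_X}$, identifies $(F_X+F_X)/\overline{F_X}$ with the free group $F_X$. So Proposition~\ref{prop3} applies with $N=\overline{F_X}$, $G=F_X+F_X$ and $\sigma=\eta$, acting on $(C,A,\nu)$; taking the induced $F_X$-module structures on $\Ker\nu$ and $A/\Im\nu$ as described at the end of Section~\ref{sec1}, it gives the long exact sequence
\[
\cdots\to H_{n-1}(F_X,\Ker\nu)\to H_n((R,F,\eta),(C,A,\nu))\to H_n(F_X,A/\Im\nu)\to H_{n-2}(F_X,\Ker\nu)\to\cdots.
\]
Now I would invoke the classical fact that a free group has homological dimension at most one, so that $H_k(F_X,M)=0$ for every $F_X$-module $M$ and every $k\geq2$. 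For $n\geq3$ both flanking terms above vanish, forcing $H_n((R,F,\eta),(C,A,\nu))=0$, which is the first assertion. For the second, I specialise to the trivial action on $(C,A,\nu)=(0,\Z,0)$: then $\Ker\nu=0$ and $A/\Im\nu=\Z$ with trivial $F_X$-action, the sequence reduces to isomorphisms $H_n(R,F,\eta)\cong H_n(F_X,\Z)=H_n(F_X)$, and the latter vanishes for $n\geq2$.

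I do not expect a genuine obstacle. The two points that need a moment's care are checking that a crossed module free over $\Set$ really is an inclusion crossed module, so that Proposition~\ref{prop3} applies, and observing that it is the \emph{quotient} $F_X$ --- which is free --- rather than the ambient group $F_X+F_X$ that governs the sequence; one should resist trying to argue instead that the nerve groups $\Nerve_p(R,F,\eta)$ are free, since they are not, already for $p=1$. (The bound $n\geq3$ for general coefficients is sharp: $H_2((R,F,\eta),(C,A,\nu))$ is only forced to be a quotient of $H_1(F_X,\Ker\nu)$.)
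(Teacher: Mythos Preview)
Your proof is correct and follows exactly the same strategy as the paper: recognise that a free crossed module is an inclusion crossed module with free quotient, then invoke Proposition~\ref{prop3} together with the vanishing of higher homology of free groups. The paper's proof is simply a terse one-line version of what you have written out in full.
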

\begin{proof} 
By the above, $(R, F, \eta)$ is an inclusion crossed module. Moreover, $F/R$ is a free group. Hence, the corollary results from Proposition~\ref{prop3}.
\end{proof}

\section{The Lyndon--Hochschild--Serre spectral sequence }

Let $G_*$ be a simplicial group acting on an abelian simplicial group $A_*$. For each $m\geq 0$ we have an abelian simplicial
group
\[
\xymatrix{\cdots \ar@<1.5ex>[r] \ar@<-1.5ex>[r] \ar@<.5ex>[r] \ar@<-.5ex>[r] & H_m(G_2,A_2) \ar@<1ex>[r] \ar[r] \ar@<-1ex>[r] & H_m(G_1,A_1) \ar@<.5ex>[r] \ar@<-.5ex>[r] & H_m(G_0, A_0),}
\]
denoted by $\HH_m(G_*, A_*)$.

\begin{remark}
$\HH_m(G_*, A_*)$ is not to be confused with $H_m(G_*, A_*)$. The latter is a group, while $\HH_m(G_*, A_*)$ is a simplicial group.
\end{remark}

Given a short exact sequence of simplicial groups
\begin{align*}
0\lra \Gm_* \lra \Pi_* \lra G_* \lra 0
\end{align*}
and an abelian simplicial group $A_*$ together with an action of $\Pi_*$ on $A_*$, we define an action of $G_*$ on
$\HH_m(\Gm_*, A_*)$ for each $m\geq 0$. First define an action of $\Pi_n$ on $B_m(\Pi_n)\otimes _{\Gm_n} A_n$ by the formula:
\begin{align*}
x(b\otm a)=bx^{-1}\otm xa, \quad x\in \Pi_n,\quad b\in B_m(\Pi_n),\quad a\in A_n.
\end{align*}
This action is $\Gm_n$-invariant. Hence, we have an induced action of $G_n$ on the group ${B_m(\Pi_n)\otimes _{\Gm_n} A_n}$.
Since the complex $B_*(\Pi_n)\otm_{\Gm_n} A_n$ calculates the homology groups
of $\Gm_n$ with coefficients in $A_n$, we have derived an action of $G_*$ on $\HH_m(\Gm_*, A_*)$, for each $m\geq 0$.

The following is an analogue of the Lyndon--Hochschild--Serre spectral sequence.

\begin{proposition}\label{prop4} 
There is a first quadrant spectral sequence
\begin{align*}
H_p(G_*, \HH_q(\Gm_*, A_*)) \Rightarrow H_{p+q}(\Pi_*, A_*).
\end{align*}
\end{proposition}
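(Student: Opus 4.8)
The plan is to build a first-quadrant bicomplex (or rather a filtered complex) whose total homology is $H_{p+q}(\Pi_*,A_*)$ and whose associated graded in one direction computes $H_p(G_*,\HH_q(\Gm_*,A_*))$. The raw material is the family of bicomplexes $B(\Pi_n, A_n)$ appearing in the definition of $H_*(\Pi_*,A_*)$; recall that $H_n(\Pi_*,A_*)=H_n(\Tot B(\Pi_*,A_*))$, where $B(\Pi_*,A_*)$ in degree $n$ is the complex $B_*(\Pi_n)\otimes_{\Pi_n}A_n$. The key observation is that for each fixed simplicial degree $n$, the classical Lyndon--Hochschild--Serre situation applies to the extension $0\to\Gm_n\to\Pi_n\to G_n\to 0$ acting on $A_n$: one resolves by first taking $\Gm_n$-homology and then $G_n$-homology. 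Concretely, using the action of $\Pi_n$ on $B_m(\Pi_n)\otimes_{\Gm_n}A_n$ described just above the statement (which descends to a $G_n$-action), one has a double complex $B_*(G_n)\otimes_{G_n}\big(B_*(\Pi_n)\otimes_{\Gm_n}A_n\big)$ whose total homology is $H_*(\Pi_n,A_n)$, and whose first page in the ``outer'' direction is $B_*(G_n)\otimes_{G_n}H_q(\Gm_n,A_n)$, i.e. the complex computing $H_p(G_n,\HH_q(\Gm_n,A_n))$.

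First I would make this pointwise construction simplicial in $n$: applying it dimensionwise to the simplicial group $\Pi_*$ yields a triple complex, indexed by (a) the $B_*(G_n)$-direction, (b) the $B_*(\Pi_n)$-direction, and (c) the simplicial direction $n$, with all face maps as in the definition of $B(\Gm_*,A_*)$ etc. Its total complex has homology $H_*(\Pi_*,A_*)$: indeed, collapsing directions (b) and (c) first recovers $\Tot B(\Pi_*,A_*)$ by the pointwise LHS statement together with an Eilenberg--Zilber/acyclic-models comparison (the complex $B_*(G_n)\otimes_{G_n}(-)$ applied to the $\Pi_n$-resolution $B_*(\Pi_n)$ of $\Z$ gives a resolution computing $\Pi_n$-homology). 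Then I would run the spectral sequence of this triple complex filtered by direction (a). On the $E^1$ (or $E^2$) page, having taken homology in directions (b) and (c), one is left with the simplicial abelian group $n\mapsto B_*(G_n)\otimes_{G_n}H_q(\Gm_n,A_n)$, whose totalisation over the $B_*(G_*)$-direction and the simplicial direction is precisely $\Tot B(G_*,\HH_q(\Gm_*,A_*))$ — so the next differential produces $H_p(G_*,\HH_q(\Gm_*,A_*))$, as desired. One should check convergence is automatic since everything lives in the first quadrant.

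The main obstacle I anticipate is bookkeeping: verifying that the various actions are compatible across the simplicial direction, so that $n\mapsto H_q(\Gm_n,A_n)=\HH_q(\Gm_*,A_*)$ really is a simplicial $G_*$-module with the action defined in the paragraph preceding the statement, and that the differentials in the triple complex are genuinely compatible with it (this is where the $\Gm_n$-invariance of the formula $x(b\otimes a)=bx^{-1}\otimes xa$ is used). A secondary subtlety is justifying that collapsing directions (b) and (c) of the triple complex recovers $\Tot B(\Pi_*,A_*)$ up to quasi-isomorphism; this needs a standard but slightly delicate Eilenberg--Zilber argument, comparing the bisimplicial object $B_*(G_\bullet)\otimes_{G_\bullet}(B_*(\Pi_\bullet)\otimes_{\Gm_\bullet}A_\bullet)$ with its diagonal, much as in the proof of Lemma~\ref{lemma2}. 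Once these compatibilities are in place, the spectral sequence is just that of a filtered complex and the identification of the $E^2$-term is the content of the pointwise LHS spectral sequence applied levelwise.
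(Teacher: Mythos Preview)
Your strategy matches the paper's: form the triple complex with entries $B_p(G_l)\otimes_{G_l}(B_q(\Pi_l)\otimes_{\Gm_l}A_l)$, identify its total homology with $H_*(\Pi_*,A_*)$, then filter to obtain the spectral sequence. Two points, however, are misspecified.

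First, to identify the total homology with $H_*(\Pi_*,A_*)$ you should, for each fixed simplicial degree $l$, collapse directions (a) and (b) --- not (b) and (c) as you wrote. For each $l$ the classical argument (as in \cite{MacLane:Homology}) gives a quasi-isomorphism $\Tot\big(B_*(G_l)\otimes_{G_l}(B_*(\Pi_l)\otimes_{\Gm_l}A_l)\big)\simeq B_*(\Pi_l)\otimes_{\Pi_l}A_l$; a column-by-column comparison of the resulting bicomplexes in the $l$-direction then shows the total complexes are quasi-isomorphic. This is exactly how the paper proceeds, and no Eilenberg--Zilber argument is needed.

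Second --- and this is the substantive gap --- the filtration that yields the stated $E^2$-term is \emph{not} by direction (a) alone but by the combined degree in directions (a) and (c): one sets
\[
F_m\;=\;\bigoplus_{\substack{p+q+l=n\\p+l\le m}}B_p(G_l)\otimes_{G_l}(B_q(\Pi_l)\otimes_{\Gm_l}A_l).
\]
Then $E^1_{m,q}$ is homology in the one remaining direction (b), namely $\bigoplus_{p+l=m}B_p(G_l)\otimes_{G_l}H_q(\Gm_l,A_l)$ (using that each $B_p(G_l)$ is $G_l$-free), and $E^2_{m,q}$ is the $m$th homology of the bicomplex $(p,l)\mapsto B_p(G_l)\otimes_{G_l}H_q(\Gm_l,A_l)$, which is $H_m(G_*,\HH_q(\Gm_*,A_*))$ by definition. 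If instead you filter by (a) alone, the $E^1$-page is the homology of the \emph{total} complex in directions (b) and (c); this does not separate into the simplicial object $l\mapsto B_*(G_l)\otimes_{G_l}H_q(\Gm_l,A_l)$ you describe, and identifying it would require a further nested spectral sequence. With the corrected filtration your argument goes through and is precisely the paper's.
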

\begin{proof} 
Consider the following bicomplex:
\[
\xymatrix{{} \ar[d]_-{\partial} & {} \ar[d]_-{-\partial} & {} \ar[d]_-{\partial}\\
X_{01} \ar[d]_-{\partial} & X_{11} \ar[d]_-{-\partial} \ar[l] & X_{21} \ar[d]_-{\partial} \ar[l] & \cdots \ar[l] \\
X_{00} & X_{10} \ar[l] & X_{20} \ar[l] & \cdots, \ar[l]}
\]
where $(X_{p*}, \pa)=\Tot \big(B(G_p)\otm_{G_p} (B(\Pi_p)\otm_{\Gm_p}A_p)\big)$. Here the tensor product of two complexes
$B(G_p)$ and $B(\Pi_p)\otm_{\Gm_p}A_p$ is defined in the ordinary way. Then, the complex $X_{p*}$ is quasi-isomorphic to
$B(\Pi_p)\otm_{\Pi_p}A_p$ (see~\cite{MacLane:Homology}). Therefore, we have an isomorphism
\begin{align}\label{equ2}
H_n(\Tot (X_{**})) \cong H_n(\Pi_*, A_*), \quad n\geq 0 .
\end{align}
Furthermore, observe that
\begin{align*}
\Tot (X_{**})_n=\bigoplus_{p+q+l=n} B_p(G_l)\otm_{G_l} (B_q(\Pi_l)\otm_{\Gm_l}A_l) .
\end{align*}
We define the following filtration on $\Tot (X_{**})$:
\begin{align*}
F_m(\Tot (X_{**})_n)= \uns{p+l\leq m}{\uns{p+q+l=n}{\bigoplus}} B_p(G_l)\otm_{G_l} (B_q(\Pi_l)\otm_{\Gm_l}A_l) .
\end{align*}
The associated spectral sequence has the form
\begin{align*}
E^1_{mq}=\uns{p+l= m}{\bigoplus} B_p(G_l)\otm_{G_l}H_q(\Gm_l, A_l),
\end{align*}
and $E^2_{mq}$ is isomorphic to the $m$th homology group of the following bicomplex:
\[
\xymatrix{{} \ar[d] & {} \ar[d]\\
B_1(G_0)\otm_{G_0}H_q(\Gm_0, A_0) \ar[d] & B_1(G_1)\otm_{G_1}H_q(\Gm_1, A_1) \ar[d] \ar[l] & \cdots \ar[l] \\
B_0(G_0)\otm_{G_0}H_q(\Gm_0, A_0) & B_0(G_1)\otm_{G_1}H_q(\Gm_1, A_1) \ar[l] & \cdots \ar[l]}
\]
By definition this bicomplex calculates the homology groups of $G_*$ with coefficients in $\HH_q(\Gm_*, A_*)$.
Thus,
\begin{align*}
E^2_{mq}= H_m (G_*, \HH_q(\Gm_*, A_*)) \Rightarrow H_{m+q} (\Tot(X_{**})).
\end{align*}
The latter together with~\eqref{equ2} imply the proposition.
\end{proof}

\begin{theorem}\label{theo1} Let $(H,G,\mu)$ be a crossed module acting on an abelian crossed module $(C, A, \nu)$.
Then, for a short exact sequence of crossed modules
\begin{align*}
0\to (H',G',\mu')\to (H,G,\mu) \to(H'',G'',\mu'')\to 0,
\end{align*}
there is a first quadrant spectral sequence
\begin{align*}
&E^2_{pq}= H_p \big(\Nerve_*(H'',G'',\mu''), \HH_q(\Nerve_*(H',G',\mu'), \Nerve_*(C, A, \nu))\big)\\
&\Rightarrow H_{p+q} ((H,G,\mu), (C, A, \nu)).
\end{align*}
\end{theorem}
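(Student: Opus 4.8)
The plan is to obtain Theorem~\ref{theo1} as an immediate consequence of Proposition~\ref{prop4}, applied to the nerves of the crossed modules in play. First I would apply the nerve functor to the given short exact sequence of crossed modules. Since the nerve functor is exact --- it preserves all limits and, as recalled in Section~\ref{sec1}, carries short exact sequences of crossed modules to short exact sequences of simplicial groups --- this produces a short exact sequence of simplicial groups
\[
0\lra \Nerve_*(H',G',\mu') \lra \Nerve_*(H,G,\mu) \lra \Nerve_*(H'',G'',\mu'') \lra 0.
\]

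Next I would record that the action of $(H,G,\mu)$ on $(C,A,\nu)$ induces, as noted at the beginning of Section~\ref{Homology}, a pointwise action of $\Nerve_*(H,G,\mu)$ on $\Nerve_*(C,A,\nu)$; and that $\Nerve_*(C,A,\nu)$ is an abelian simplicial group, because $(C,A,\nu)$ is an abelian crossed module, so each $\Nerve_n(C,A,\nu)$ is a (trivial) iterated semidirect product of abelian groups and hence abelian. Thus we are precisely in the setting of Proposition~\ref{prop4}, with $\Pi_*=\Nerve_*(H,G,\mu)$, $\Gm_*=\Nerve_*(H',G',\mu')$, $G_*=\Nerve_*(H'',G'',\mu'')$ and $A_*=\Nerve_*(C,A,\nu)$, the action of $G_*$ on $\HH_q(\Gm_*,A_*)$ being the one manufactured in the proof of that proposition.

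Invoking Proposition~\ref{prop4} then yields a first quadrant spectral sequence
\[
E^2_{pq}=H_p\big(G_*, \HH_q(\Gm_*,A_*)\big) \Rightarrow H_{p+q}(\Pi_*,A_*).
\]
It remains only to translate the endpoints back into the language of crossed modules: the $E^2$ term is by construction
$H_p\big(\Nerve_*(H'',G'',\mu''), \HH_q(\Nerve_*(H',G',\mu'), \Nerve_*(C,A,\nu))\big)$, while the abutment $H_{p+q}(\Pi_*,A_*)$ equals $H_{p+q}((H,G,\mu),(C,A,\nu))$ by the very definition of the homology of a crossed module with coefficients in an abelian crossed module (both are $H_{p+q}$ of $\Tot B(\Nerve_*(H,G,\mu),\Nerve_*(C,A,\nu))$).

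The substance of the argument is therefore entirely contained in Proposition~\ref{prop4} together with exactness of the nerve functor; what is left is pure bookkeeping. The only point I would be slightly careful about is checking that the $G_*$-module structure on $\HH_q(\Gm_*,A_*)$ furnished by Proposition~\ref{prop4} is indeed the structure intended in the statement, and that the identification of the abutment is the correct one. Since every construction involved --- the nerve, the induced actions, the bicomplex $B(-,-)$ --- is functorial, I do not expect any genuine obstacle here.
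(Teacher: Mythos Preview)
Your proposal is correct and follows exactly the approach of the paper, whose proof reads simply ``Straightforward from the previous proposition.'' You have merely spelled out the bookkeeping---exactness of the nerve, abelianness of $\Nerve_*(C,A,\nu)$, and the identification of the abutment---that the paper leaves implicit.
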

\begin{proof} Straightforward from the previous proposition.
\end{proof}

\begin{lemma}\label{lemma3} Let $(\al, \be)\colon (N, G, \sigma)\to (N', G', \sigma')$ be a morphism of inclusion
crossed modules such that the induced map $G/N\to G'/N'$ is an isomorphism. Suppose $\Nerve_*(N', G', \sigma')$ acts on an abelian
simplicial group $A_*$. Then there is an isomorphism
\begin{align*}
H_n(\Nerve_*(N, G, \sigma), A_*)\cong H_n(\Nerve_*(N', G', \sigma'), A_*), \quad n\geq 0,
\end{align*}
where $\Nerve_*(N, G, \sigma)$ acts on $A_*$ via $\Nerve_*(N, G, \sigma)\to \Nerve_*(N', G', \sigma')$.
\end{lemma}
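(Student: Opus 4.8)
The plan is to build a morphism of bicomplexes from $B(\Nerve_*(N, G, \sigma), A_*)$ to $B(\Nerve_*(N', G', \sigma'), A_*)$ induced by $(\al, \be)$ and show it is a quasi-isomorphism after totalisation, which by the spectral sequence $E^1_{pq} = H_q(B_p(\ldots))$ reduces to checking it on each column. So first I would fix $m \geq 0$ and examine the map of abelian simplicial groups $B_m(\Nerve_*(N, G, \sigma), A_*) \to B_m(\Nerve_*(N', G', \sigma'), A_*)$ whose value in simplicial degree $k$ is the map $B_m(\Nerve_k(N,G,\sigma)) \otimes_{\Nerve_k(N,G,\sigma)} A_k \to B_m(\Nerve_k(N',G',\sigma')) \otimes_{\Nerve_k(N',G',\sigma')} A_k$ induced by $\Nerve_k(\al,\be)$. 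The key point is that this map only depends on the induced map between the \emph{quotients} $\Nerve_k(N,G,\sigma)/\Nerve_k(N,N,\mathrm{id}) \to \Nerve_k(N',G',\sigma')/\Nerve_k(N',N',\mathrm{id})$ — indeed, just as in Lemma~\ref{lemma2} one has $B_m(\Gm_k) \otimes_{\Gm_k} A_k \cong B_{m-1}(\Gm_k) \otimes A_k$ and the $\Gm_k$-action on $A_k$ factors through $\pi_0$-type quotients; more precisely the relevant invariance is that $B_*(\Gm_k) \otimes_{\Gm_k} A_k$ computes $H_*(\Gm_k, A_k)$ and a surjection of groups with the same action on $A_k$ and "the same kernel behaviour" induces the right map. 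Since by hypothesis the map of nerves $\Nerve_*(N, G, \sigma) \to \Nerve_*(N', G', \sigma')$ becomes an isomorphism after quotienting by the nerve of the inclusion crossed module of $N$ (resp.\ $N'$) into itself — this is exactly the content of $G/N \cong G'/N'$, lifted dimensionwise — the induced map on the columns is an isomorphism in each simplicial degree $k$, hence an isomorphism of abelian simplicial groups, hence induces isomorphisms on all homotopy groups $\pi_n$.

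More carefully, I would argue as follows. For an inclusion crossed module $(N, G, \sigma)$ the simplicial group $\Nerve_*(N, G, \sigma)$ has $\Nerve_k = N \rtimes (\cdots \rtimes G)$ with $k$ copies of $N$, and there is an obvious short exact sequence of simplicial groups $0 \to \Nerve_*(N, N, \mathrm{id}) \to \Nerve_*(N, G, \sigma) \to \Nerve_*(0, G/N, 0) \to 0$, the last term being the constant-in-the-$N$-direction quotient which in degree $k$ is just $G/N$ (viewed via the nerve of the group, or rather: it is $\Nerve_*$ applied to $(0,G/N,0)$, a \emph{discrete} simplicial group $(G/N)$ placed appropriately). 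Comparing the two short exact sequences for $(N,G,\sigma)$ and $(N',G',\sigma')$, the hypothesis $G/N \cong G'/N'$ says the right-hand quotients agree. Now the action of $\Nerve_k(N,G,\sigma)$ on $A_k$ factors through this quotient (since $A_*$ carries an action of $\Nerve_*(N',G',\sigma')$ and, after restriction along $(\al,\be)$, the subgroup corresponding to $N$ acts compatibly), so $B_*(\Nerve_k(N,G,\sigma)) \otimes_{\Nerve_k(N,G,\sigma)} A_k$ and $B_*(\Nerve_k(N',G',\sigma')) \otimes_{\Nerve_k(N',G',\sigma')} A_k$ are computed by the same data. This gives the isomorphism column-by-column, and the comparison spectral sequence (or just the total-complex argument) upgrades it to the claimed isomorphism $H_n(\Nerve_*(N,G,\sigma), A_*) \cong H_n(\Nerve_*(N',G',\sigma'), A_*)$.

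The main obstacle I anticipate is making precise the claim that the relevant bicomplex columns "only see the quotient $G/N$", i.e.\ that the functor $\Gm \mapsto B_*(\Gm) \otimes_{\Gm} A$ sends a suitable class of group surjections to quasi-isomorphisms when the action on $A$ is compatible. The cleanest route is probably to exhibit an explicit retraction-type argument at the level of the contractions used in Lemma~\ref{lemma1}: the morphism $(\al,\be)$ need not be injective or surjective on the nose, so one should first factor it and treat the surjective and injective parts separately, or — better — observe that on Moore complexes the hypothesis $G/N \cong G'/N'$ together with \eqref{equ1} forces $\Nerve_*(N,G,\sigma)$ and $\Nerve_*(N',G',\sigma')$ to have the same $\pi_0$ and that the relevant homology only depends on $\pi_0$ together with the action, which is the same on both sides by construction. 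I would expect the bookkeeping with the $\Gm_n$-coinvariants and the verification that the induced action on $A_*$ genuinely factors through $G/N \cong G'/N'$ to be the only genuinely delicate points; everything else is the by-now-standard Eilenberg--Zilber/spectral-sequence machinery already used in Lemmas~\ref{lemma1} and~\ref{lemma2}.
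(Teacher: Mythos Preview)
Your overall architecture is right and matches the paper: induce a map of bicomplexes, then compare the two spectral sequences whose $E^1$-page is obtained by first taking homology in the simplicial direction, i.e.\ by computing $\pi_n\bigl(B_m(\Nerve_*(-),A_*)\bigr)$ for each fixed $m$. The gap is in how you try to control these columns.

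You claim that the map $B_m(\Nerve_k(N,G,\sigma))\otimes_{\Nerve_k(N,G,\sigma)}A_k \to B_m(\Nerve_k(N',G',\sigma'))\otimes_{\Nerve_k(N',G',\sigma')}A_k$ is an isomorphism in each simplicial degree $k$, because it ``only sees the quotient''. This is false: as you yourself note, $B_m(\Gamma)\otimes_{\Gamma}A\cong B_{m-1}(\Gamma)\otimes A$, which visibly depends on the underlying set of $\Gamma$, not on any quotient of it. Since $\Nerve_k(N,G,\sigma)=N\rtimes\cdots\rtimes G$ and $\Nerve_k(N',G',\sigma')=N'\rtimes\cdots\rtimes G'$ need not even have the same cardinality, there is no degree-wise isomorphism of columns. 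So the argument in your second paragraph cannot be completed as written, and the ``only sees $G/N$'' heuristic, though morally correct at the level of homotopy, is wrong at the chain level.

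The fix is exactly the \emph{statement} of Lemma~\ref{lemma2} (not the preliminary isomorphism you extracted from its proof). Since $(N,G,\sigma)$ is an inclusion crossed module, $\Nerve_*(N,G,\sigma)$ is aspherical over $G/N$, so Lemma~\ref{lemma2} gives $\pi_n\bigl(B_m(\Nerve_*(N,G,\sigma),A_*)\bigr)\cong B_{m-1}(G/N)\otimes\pi_n(A_*)$, and similarly with primes. The hypothesis $G/N\cong G'/N'$ then makes the induced map an isomorphism on the $E^1$-page, and the spectral sequence comparison finishes the proof. This is precisely what the paper does; your final paragraph gropes toward it (``only depends on $\pi_0$ together with the action''), but you should invoke Lemma~\ref{lemma2} directly rather than trying to manufacture a degree-wise statement that does not hold.
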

\begin{proof} The morphism $(\al, \be)\colon (N, G, \sigma)\to (N', G', \sigma')$ induces a homomorphism of bicomplexes
$B(\Nerve_*(N, G, \sigma), A_*)\to B(\Nerve_*(N', G', \sigma), A_*)$. By Lemma~\ref{lemma2} we have
\begin{align*}
\pi_n(B_m(\Nerve_*(N, G, \sigma), A_*))&=B_{m-1}(G/N)\otimes \pi_n (A_*),\\
\pi_n(B_m(\Nerve_*(N', G', \sigma'), A_*))&=B_{m-1}(G'/N')\otimes \pi_n (A_*)
\end{align*}
for $m$, $n\geq 0$. Hence a spectral sequence argument implies that
\[
\Tot B(\Nerve_*(N, G, \sigma), A_*)\to \Tot B(\Nerve_*(N', G', \sigma), A_*)
\]
is a quasi-isomorphism.
\end{proof}

Recall that a morphism of crossed modules $(\al, \be)\colon (H, G, \mu)\to (H', G', \mu')$ is said to be a \defn{weak equivalence} when the induced morphisms $\al \colon \Ker \mu \to \Ker \mu'$ and $\be \colon G/\Im\mu \to G'/\Im\mu'$ are isomorphisms. As explained in~\cite{Garzon-Miranda}, these weak equivalences are actually part of a Quillen model category structure on $\XMod$; in particular, any homotopy equivalence of crossed modules is a weak equivalence. The following proposition shows that the homology groups of crossed modules are homotopical invariants. 

\begin{proposition}\label{prop6} 
Let $(\al, \be)\colon (H, G, \mu)\to (H', G', \mu')$ be a weak equivalence of crossed modules.
Suppose $(H', G', \mu')$ acts on an abelian crossed module $(C, A, \nu)$ and consider the induced action of
$(H, G, \mu)$ on $(C, A, \nu)$. Then there is an isomorphism
\begin{align*}
H_n((H, G, \mu), (C, A, \nu))\cong H_n((H', G', \mu'), (C, A, \nu)), \quad n\geq 0.
\end{align*}
\end{proposition}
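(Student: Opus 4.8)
The plan is to factor the weak equivalence $(\al,\be)$ through an intermediate crossed module in such a way that each factor can be handled either by the already-proven Lemma~\ref{lemma3} or by a direct simplicial argument, and then use the spectral sequence $E^1_{pq}=H_q(\Nerve_p(H,G,\mu),\Nerve_p(C,A,\nu))\Rightarrow H_{p+q}((H,G,\mu),(C,A,\nu))$ from Section~\ref{Homology} to reduce the whole statement to a comparison of the $E^1$-pages. First I would recall that $(\al,\be)$ being a weak equivalence means precisely that the induced map of nerves $\Nerve_*(H,G,\mu)\to\Nerve_*(H',G',\mu')$ is a weak equivalence of simplicial groups, i.e.\ induces isomorphisms on all homotopy groups $\pi_n$ (this follows from~\eqref{equ1}, since the only nontrivial homotopy groups are $\pi_0$ and $\pi_1$, which are $G/\Im\mu$ and $\Ker\mu$). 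So the real content is: \emph{a weak equivalence of simplicial groups $G_*\to G'_*$, together with a compatible action on an abelian simplicial group $A_*$ (arising here from $\Nerve_*(C,A,\nu)$), induces an isomorphism $H_n(G_*,A_*)\cong H_n(G'_*,A_*)$.}

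To prove that, I would argue level-wise on the bicomplex $B(G_*,A_*)$. Fix a simplicial degree; in bidegree coming from $G_k$ and $A_k$ the relevant column computes the Eilenberg--Mac\,Lane homology $H_*(G_k,A_k)$ via $B_*(G_k)\otimes_{G_k}A_k$. The subtlety is that $G_k\to G'_k$ is \emph{not} in general an isomorphism (only the map of nerves is a weak equivalence, not a degreewise iso), so one cannot compare columns one at a time. The right move is instead to compare the simplicial abelian groups $\HH_m(G_*,A_*)$ and $B_m(G_*,A_*)$ introduced in Section~4 and in Lemma~\ref{lemma2}: by Lemma~\ref{lemma2} (applied after observing that a weak equivalence of simplicial groups factors, up to the relevant homotopy data, through inclusion crossed modules with isomorphic $G/N$, exactly as in Lemma~\ref{lemma3}), the homotopy groups $\pi_n(B_m(G_*,A_*))$ depend on $G_*$ only through the quotient $\pi_0(G_*)=G/\Im\mu$ and through $\pi_*(A_*)$, both of which are preserved by $(\al,\be)$. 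Thus the induced map $B(G_*,A_*)\to B(G'_*,A'_*)$ is a levelwise $\pi_*$-isomorphism of bicomplexes, hence (via the spectral sequence of the double complex, exactly the spectral sequence argument used at the end of Lemma~\ref{lemma3}) a quasi-isomorphism on totalisations, which gives the claimed isomorphism on $H_n$.

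Concretely I would write: reduce to $(C,A,\nu)$ fixed and $A_*=\Nerve_*(C,A,\nu)$; invoke that $(\al,\be)$ induces isomorphisms on $\pi_0$ and $\pi_1$ of nerves; then for each $m$ apply Lemma~\ref{lemma2} to both $\Nerve_*(H,G,\mu)$ and $\Nerve_*(H',G',\mu')$ — here one needs that both admit the augmentation by their $\pi_0$ making the augmented simplicial group aspherical, which for nerves of crossed modules is the content of \eqref{equ1} after replacing the crossed module by the inclusion crossed module of $\Im\mu\normal G$ as in Lemma~\ref{lemma3} — to get
\[
\pi_n\bigl(B_m(\Nerve_*(H,G,\mu),A_*)\bigr)\cong B_{m-1}(G/\Im\mu)\otimes\pi_n(A_*)\cong\pi_n\bigl(B_m(\Nerve_*(H',G',\mu'),A_*)\bigr),
\]
compatibly with the comparison map; conclude that $\Tot B(\Nerve_*(H,G,\mu),A_*)\to\Tot B(\Nerve_*(H',G',\mu'),A_*)$ is a quasi-isomorphism by comparing the two associated first-quadrant spectral sequences, which agree from $E^1$ on. The main obstacle is the first reduction: Lemma~\ref{lemma2} is stated for a fixed augmented aspherical simplicial group, so I must be careful that the weak equivalence is genuinely compatible with the augmentations — i.e.\ that the square relating $\Nerve_*(H,G,\mu)\to\Nerve_*(H',G',\mu')$ to $G/\Im\mu\xrightarrow{\ \sim\ }G'/\Im\mu'$ commutes and that both vertical augmented simplicial groups are aspherical. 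This is where the inclusion-crossed-module trick of Lemma~\ref{lemma3} does the work, so in effect the proposition is "Lemma~\ref{lemma3} plus the observation that a weak equivalence reduces to the situation of Lemma~\ref{lemma3} after passing to inclusion crossed modules of the images"; everything else is the same spectral-sequence bookkeeping already carried out above.
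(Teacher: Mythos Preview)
Your proposal has a genuine gap at the key reduction step. You want to apply Lemma~\ref{lemma2} directly to $\Nerve_*(H,G,\mu)$ augmented by $G/\Im\mu$, but that lemma requires the augmented simplicial group to be \emph{aspherical}. By~\eqref{equ1}, $\pi_1(\Nerve_*(H,G,\mu))\cong\Ker\mu$, so the augmentation to $\pi_0$ is aspherical only when $\mu$ is injective, i.e.\ only for inclusion crossed modules. Your proposed fix---``replacing the crossed module by the inclusion crossed module of $\Im\mu\normal G$''---does not preserve the homology you are computing: the map $(H,G,\mu)\to(H/\Ker\mu,G,\mu)$ kills $\pi_1$ and is not a weak equivalence, so $H_n((H,G,\mu),(C,A,\nu))$ and $H_n((H/\Ker\mu,G,\mu),(C,A,\nu))$ differ in general. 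Consequently the displayed isomorphism $\pi_n(B_m(\Nerve_*(H,G,\mu),A_*))\cong B_{m-1}(G/\Im\mu)\otimes\pi_n(A_*)$ is unjustified.

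The paper supplies exactly the missing ingredient: rather than forcing asphericity, it uses the Lyndon--Hochschild--Serre spectral sequence of Proposition~\ref{prop4} for the short exact sequence
\[
0\to(\Ker\mu,0,0)\to(H,G,\mu)\to(H/\Ker\mu,G,\mu)\to 0
\]
(and similarly on the primed side). This produces spectral sequences converging to the two homologies in question, whose $E^2$-terms are $H_p$ of the inclusion crossed module $(H/\Ker\mu,G,\mu)$ with coefficients in $\HH_q(\Nerve_*(\Ker\mu,0,0),\Nerve_*(C,A,\nu))$. Now both halves of the weak-equivalence hypothesis are used cleanly: the isomorphism $\Ker\mu\cong\Ker\mu'$ identifies the coefficient simplicial groups, and the isomorphism $G/\Im\mu\cong G'/\Im\mu'$ lets Lemma~\ref{lemma3} identify the $E^2$-pages. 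Your sketch gestures at this factorisation in its last sentence but treats it as bookkeeping; in fact the LHS spectral sequence is what makes the reduction to inclusion crossed modules legitimate.
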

\begin{proof} 
We have the following exact sequences of crossed modules:
\begin{align*}
& 0\to (\Ker\mu, 0, 0)\to (H, G, \mu)\to (H/\Ker\mu, G, \mu)\to 0 ,\\
& 0\to (\Ker\mu', 0, 0)\to (H', G', \mu')\to (H'/\Ker\mu', G', \mu')\to 0 .
\end{align*}
We define a bicomplex $X_{**}$ (respectively $X'_{**}$) as in Proposition~\ref{prop4} by setting 
$\Gm_*=\Nerve_*(\Ker\mu, 0, 0)$, $\Pi_*=\Nerve_*(H, G, \mu)$ and $G_*=\Nerve_*(H/\Ker\mu, G, \mu)$
(respectively $\Gm'_*=\Nerve_*(\Ker\mu', 0, 0)$, $\Pi'_*=\Nerve_*(H', G', \mu')$ and $G'_*=\Nerve_*(H'/\Ker\mu', G', \mu')$).
Then, the morphism $(\al, \be)\colon (H, G, \mu)\to (H', G', \mu')$ induces a homomorphism of bicomplexes $X_{**}\to X'_{**}$.
This homomorphism is compatible with filtration defined in Proposition~\ref{prop4}. We have the following spectral sequences:
\begin{align*}
E^2_{pq}&=H_p\big(\Nerve_*(H/\Ker\mu, G, \mu), \HH_q( \Nerve_*(\Ker\mu, 0, 0), \Nerve_*(C, A, \nu))\big) \\
&\Rightarrow H_{p+q}(\Tot (X_{**}))= H_{p+q}((H, G, \mu), (C, A, \nu)),\\
E'^2_{pq}&=H_p\big(\Nerve_*(H'/\Ker\mu', G', \mu'), \HH_q( \Nerve_*(\Ker\mu', 0, 0), \Nerve_*(C, A, \nu))\big)\\
& \Rightarrow H_{p+q}(\Tot (X'_{**}))= H_{p+q}((H', G', \mu'), (C, A, \nu)).
\end{align*}
Since both $(H/\Ker\mu, G, \mu)$ and $(H'/\Ker\mu', G', \mu')$ are inclusion crossed modules, Lemma~\ref{lemma3} yields an isomorphism
$E^2_{pq}\cong E'^2_{pq}$ for each $p\geq 0$, $q\geq 0$. This implies that $\Tot (X_{**})$ and $\Tot (X'_{**})$ are quasi-isomorphic.
\end{proof}
Another consequence of the Lyndon--Hochschild--Serre spectral sequence is a five-term exact sequence relating homologies of
crossed modules in lower dimensions. Recall that for each crossed module $(H, G, \mu)$ its abelianisation is given by
\begin{align*}
(H, G, \mu)_{ab}=(H/[G, H], G/[G,G], \mu),
\end{align*}
where $[G, H]\normal H$ is the normal subgroup of $H$ generated by all $^ghh^{-1}$ for $g\in G$, $h\in H$ (we refer the reader to~\cite{Carrasco-Homology} to see that this is correct from the categorical point of view). It is easy to see that if $(H,G,\mu)$ acts trivially on $(0, \Z, 0)$
then
\begin{align}\label{equ3}
\HH_1(\Nerve_*(H,G,\mu), \Nerve_*(0, \Z, 0))= \Nerve_*(H,G,\mu)_{ab}.
\end{align}

\begin{proposition}\label{prop7} Given a short exact sequence of crossed modules
\begin{align*}
0\to (H',G',\mu')\to (H,G,\mu) \to(H'',G'',\mu'')\to 0,
\end{align*}
we have the following exact sequence:
\begin{align*}
&H_2(G, H, \mu)\to H_2(H'', G'', \mu'') \to G'/(\mu'(H')[G, G']) \\
&\to G/(\mu (H)[G, G]) \to G''/(\mu (H'')[G'', G'']) \to 0
\end{align*}
Moreover, there is a homomorphism $H_3(H, G, \mu)\to H_1((H'', G'', \mu''), (H', G', \mu')_{ab})$ and an epimorphism
from
\[
\Ker \left(H_2(G, H, \mu)\to H_2(G'', H'', \mu'')\right)
\]
to
\[
\Coker \left(H_3(H, G, \mu)\to H_1((H'', G'', \mu''), (H', G', \mu')_{ab})\right).
\]
\end{proposition}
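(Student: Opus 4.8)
The plan is to extract both assertions from the Lyndon--Hochschild--Serre spectral sequence of Theorem~\ref{theo1} applied, with integral coefficients, to the given short exact sequence
\[
0\to (H',G',\mu')\to (H,G,\mu)\to (H'',G'',\mu'')\to 0,
\]
acting trivially on $(0,\Z,0)$. The relevant spectral sequence is
\[
E^2_{pq}=H_p\big(\Nerve_*(H'',G'',\mu''),\HH_q(\Nerve_*(H',G',\mu'),\Nerve_*(0,\Z,0))\big)\Rightarrow H_{p+q}(H,G,\mu).
\]
First I would identify the low-degree coefficient simplicial groups. By~\eqref{equ1}, $H_0(\Nerve_n(H',G',\mu'),\Z)=\Z$, so $\HH_0(\Nerve_*(H',G',\mu'),\Nerve_*(0,\Z,0))$ is the constant simplicial group $\Z$ and $E^2_{p0}=H_p(H'',G'',\mu'')$. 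By~\eqref{equ3}, $\HH_1(\Nerve_*(H',G',\mu'),\Nerve_*(0,\Z,0))=\Nerve_*(H',G',\mu')_{ab}$, and applying Proposition~\ref{prop1} and the description of the abelianisation one computes that $H_0(\Nerve_*(H'',G'',\mu''),\Nerve_*(H',G',\mu')_{ab})$ in degree zero gives $G'/(\mu'(H')[G,G'])$ (here one must be careful that it is the $G$-action, not just the $G'$-action, that enters, since $(H'',G'',\mu'')$ acts on the abelianisation of $(H',G',\mu')$ via the conjugation action of $(H,G,\mu)$). Thus $E^2_{01}=G'/(\mu'(H')[G,G'])$.

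Next I would run the standard five-term exact sequence of a first-quadrant homological spectral sequence:
\[
E^2_{20}\to E^2_{01}\to H_1\to E^2_{10}\to 0
\]
is not quite what is wanted; rather the correct five-term sequence here is
\[
H_2(H,G,\mu)\to E^2_{20}\to E^2_{01}\to H_1(H,G,\mu)\to E^2_{10}\to 0.
\]
Combining with $E^2_{p0}=H_p(H'',G'',\mu'')$, with Proposition~\ref{prop2} (which gives $H_1(H,G,\mu)=G/(\mu(H)[G,G])$, $H_1(H'',G'',\mu'')=G''/(\mu(H'')[G'',G''])$), and with the identification of $E^2_{01}$ above, this yields exactly the claimed five-term sequence
\[
H_2(G,H,\mu)\to H_2(H'',G'',\mu'')\to G'/(\mu'(H')[G,G'])\to G/(\mu(H)[G,G])\to G''/(\mu(H'')[G'',G''])\to 0.
\]
The subtle point is that $E^2_{20}=H_2(H'',G'',\mu'')$ by the above identification of the bottom row, so the first map in the five-term sequence indeed has source $H_2(H,G,\mu)$ and the fourth term is really $E^2_{20}$ composed correctly --- here I would double-check the edge-map bookkeeping so that the sequence reads as stated, with $H_2(H,G,\mu)$ appearing and the transgression landing in $E^2_{01}$.

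For the second part, I would go one step further into the spectral sequence. The map $H_3(H,G,\mu)\to E^2_{21}\to E^2_{03}$-type transgressions are not what we want; instead the relevant edge homomorphism is $H_3(H,G,\mu)\twoheadrightarrow E^\infty_{?1}\hookrightarrow \cdots$ feeding into $E^2_{01}$-adjacent terms. Concretely, since $E^2_{pq}=0$ for $q\ge 2$ when the coefficient crossed module is $(0,\Z,0)$ --- this is where Lemma~\ref{lemma2} together with~\eqref{equ1} is decisive, collapsing the $q\ge 2$ rows --- the spectral sequence is concentrated in the two rows $q=0,1$. A two-row spectral sequence degenerates into a long exact sequence (the Wang/Gysin-type sequence)
\[
\cdots\to E^2_{n,0}\to H_n\to E^2_{n-2,1}\xrightarrow{d^2} E^2_{n-1,0}\to H_{n-1}\to\cdots,
\]
namely
\[
\cdots\to H_n(H'',G'',\mu'')\to H_n(H,G,\mu)\to H_{n-2}(H'',G'',\mu'',(H',G',\mu')_{ab})\to H_{n-1}(H'',G'',\mu'')\to\cdots.
\]
Wait --- I must recheck: $H_1$ of $(H,G,\mu)$ with those coefficients is $E^2_{01}$ since the $q\ge2$ rows vanish, so $H_1((H'',G'',\mu''),(H',G',\mu')_{ab})=E^2_{0,1}$ modulo the $d^2$ hitting it, i.e.\ $E^2_{01}$ sits in the exact sequence $H_3(H,G,\mu)\to E^2_{21}\xrightarrow{d^2}E^2_{30}$ and $E^2_{21}\to H_1((H'',G'',\mu''),(H',G',\mu')_{ab})$. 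From the long exact sequence one extracts: the composite $H_3(H,G,\mu)\to E^\infty_{2,1}\hookrightarrow E^2_{2,1}\to H_1((H'',G'',\mu''),(H',G',\mu')_{ab})$ is the asserted homomorphism, and its cokernel is $E^2_{2,1}/E^\infty_{2,1}=\Im(d^2\colon E^2_{2,1}\to E^2_{1,0})$, which is a quotient of $\Ker(E^2_{1,0}\to \text{something})$; tracking the bottom row $E^2_{*,0}=H_*(H'',G'',\mu'')$ and comparing with the map induced by $(H,G,\mu)\to(H'',G'',\mu'')$ on $H_2$ identifies this image as a quotient of $\Ker\big(H_2(G,H,\mu)\to H_2(G'',H'',\mu'')\big)$, giving the desired epimorphism. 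The main obstacle I expect is precisely this last identification: making the $d^2$-differential on $E^2_{2,1}$ explicit enough to recognize its image as the stated kernel-quotient, and ensuring the naturality square relating the spectral sequence edge maps to the functorially induced map $H_2(H,G,\mu)\to H_2(H'',G'',\mu'')$ commutes. Everything else --- the row-collapse via Lemma~\ref{lemma2}, the computation of $E^2_{01}$ via Propositions~\ref{prop1} and~\ref{prop2} --- is routine bookkeeping.
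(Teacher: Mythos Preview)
Your treatment of the five-term sequence is essentially the paper's: apply Theorem~\ref{theo1} with trivial $(0,\Z,0)$-coefficients, use $\HH_0=\Z$ so that $E^2_{p0}=H_p(H'',G'',\mu'')$, use~\eqref{equ3} to identify $\HH_1=\Nerve_*(H',G',\mu')_{ab}$, compute $E^2_{01}$ via Proposition~\ref{prop1}, and read off the standard five-term sequence with Proposition~\ref{prop2} supplying the $H_1$ terms. That part is fine.

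The second half, however, rests on a false claim. You assert that $E^2_{pq}=0$ for $q\geq 2$, citing Lemma~\ref{lemma2} and~\eqref{equ1}, and then pass to a two-row long exact sequence. But Lemma~\ref{lemma2} computes $\pi_n(B_m(G_*,A_*))$ for an \emph{aspherical} augmented simplicial group; it says nothing about the vanishing of the coefficient simplicial group $\HH_q(\Nerve_*(H',G',\mu'),\Nerve_*(0,\Z,0))$. The $n$th term of the latter is the ordinary group homology $H_q(\Nerve_n(H',G',\mu'),\Z)$ of an iterated semidirect product, which is generally nonzero for $q\geq 2$. So the rows $q\geq 2$ do not collapse and your Gysin-type long exact sequence is unavailable. (You also mislabel the term $H_1((H'',G'',\mu''),(H',G',\mu')_{ab})$: it is $E^2_{11}$, not $E^2_{01}$.)

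The paper avoids any collapse and uses two facts valid in \emph{any} first-quadrant spectral sequence. First, the filtration on $H_2$ yields an epimorphism
\[
\Ker\big(H_2(H,G,\mu)\to E^2_{20}\big)\twoheadrightarrow E^\infty_{11}.
\]
Second, every differential $d^r$ with $r\geq 3$ entering or leaving the $(1,1)$ position has source or target outside the first quadrant, so
\[
E^\infty_{11}=\Coker\big(d^2\colon E^2_{30}\to E^2_{11}\big).
\]
One then substitutes $E^2_{11}=H_1((H'',G'',\mu''),(H',G',\mu')_{ab})$ (from~\eqref{equ3}) and $E^2_{30}$ for the relevant third homology group to obtain the claimed homomorphism and epimorphism. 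The point is that the $(1,1)$ spot is protected from higher differentials purely for degree reasons; no information about the rows $q\geq 2$ is needed.
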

\begin{proof} 
Suppose that $(H,G,\mu)$ acts trivially on $(0, \Z, 0)$. Then the Lyndon--Hoch\-schild--Serre spectral sequence
assumes the following form:
\[
\resizebox{\textwidth}{!}
{$E^2_{pq}= H_p \big(\Nerve_*(H'',G'',\mu''), \HH_q(\Nerve_*(H',G',\mu'), \Nerve_*(0,\Z, 0))\big) \Rightarrow
H_{p+q} (H,G,\mu)$.}
\]
Since this is a first quadrant spectral sequence, we have the following exact sequence:
\begin{align*}
H_2(H, G, \mu) \to E^2_{20} \to E^2_{01} \to H_1(H, G, \mu)\to E^2_{10} \to 0.
\end{align*}
Clearly $E^2_{20}=H_2(H'',G'',\mu'')$ and $E^2_{10}=H_1(H'',G'',\mu'')$. By Proposition~\ref{prop1}, Proposition~\ref{prop2}
and~\eqref{equ3} we have:
\begin{align*}
&E^2_{01}= H_0((H'',G'',\mu''), \Nerve_*(H',G',\mu')_{ab})=G'/(\mu'(H')[G, G']), \\
&H_1(H, G, \mu)=G/(\mu(H)[G, G]), \\
&E^2_{10}=H_1(H'',G'',\mu'')= G''/(\mu(H'')[G'', G'']).
\end{align*}
This implies the first part of the proposition.

Furthermore, since $E^2_{pq}$ is a first quadrant spectral sequence, we have an epimorphism:
\begin{align*}
\Ker\left( H_2(H, G, \mu) \to E^2_{20}\right) \to E^\infty_{11},
\end{align*}
and
\begin{align*}
E^\infty_{11} = \Coker\left( E^2_{30} \to E^2_{11}\right).
\end{align*}
Substituting $E^2_{30}$ and $E^2_{11}$ for the suitable homology groups, we obtain the desired result.
\end{proof}

Given a crossed module $(H, G, \mu)$, a \defn{free presentation} of $(H, G, \mu)$ is defined to be a short exact
sequence of crossed modules
\begin{align}\label{equ4}
0\to (R', F', \eta') \to (R, F, \eta) \to (H, G, \mu) \to 0,
\end{align}
where $(R, F, \eta)$ is a free crossed module (with respect to $\Set$).

\begin{corollary}[Hopf Formula] \label{cor1}
For any free presentation~\eqref{equ4}, there is an isomorphism:
\begin{align*}
H_2(H, G, \mu) \cong \frac{(\eta(R)\cdot [F, F])\cap F'}{\eta '(R')\cdot [F, F']}.
\end{align*}
\end{corollary}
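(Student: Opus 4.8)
The plan is to obtain the Hopf formula as a direct consequence of the five-term exact sequence of Proposition~\ref{prop7} together with the vanishing statement of Corollary~\ref{cor}. First I would feed the free presentation~\eqref{equ4}, viewed as a short exact sequence of crossed modules, into Proposition~\ref{prop7} with integral coefficients (so that each crossed module acts trivially on $(0,\Z,0)$). Under the substitution $(H',G',\mu')=(R',F',\eta')$, $(H,G,\mu)=(R,F,\eta)$ and $(H'',G'',\mu'')=(H,G,\mu)$, this yields the exact sequence of abelian groups
\begin{align*}
H_2(R,F,\eta)\to H_2(H,G,\mu)\to \frac{F'}{\eta'(R')[F,F']}\to \frac{F}{\eta(R)[F,F]}\to \frac{G}{\mu(H)[G,G]}\to 0.
\end{align*}
Since $(R,F,\eta)$ is free over $\Set$, Corollary~\ref{cor} gives $H_2(R,F,\eta)=0$, so the map out of it is zero and exactness then identifies $H_2(H,G,\mu)$ with the kernel of the homomorphism $F'/(\eta'(R')[F,F'])\to F/(\eta(R)[F,F])$.

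It then remains to describe this kernel explicitly. The relevant homomorphism is induced by the inclusion $F'\hra F$ (recall that in a short exact sequence of crossed modules $F'=\Ker(F\to G)$): this follows from the naturality of the edge morphism $E^2_{01}\to H_1$ in the spectral sequence of Theorem~\ref{theo1}, under the identifications of $E^2_{01}$ and of $H_1(H,G,\mu)$ carried out in the proof of Proposition~\ref{prop7} by means of Propositions~\ref{prop1} and~\ref{prop2}. Hence a coset $x\cdot\eta'(R')[F,F']$ with $x\in F'$ lies in the kernel precisely when $x\in \eta(R)[F,F]$, i.e.\ when $x\in(\eta(R)[F,F])\cap F'$. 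Because $\eta'(R')=\eta(R')\seq\eta(R)$ and $[F,F']\seq[F,F]$, while $\eta'(R')[F,F']$ is normal in $F'$ (indeed $\eta(R')$ is $F$-invariant by the precrossed module identity and $[F,F']\normal F$), the subgroup $\eta'(R')[F,F']$ sits inside $(\eta(R)[F,F])\cap F'$ as a normal subgroup, and the kernel is exactly the quotient $\big((\eta(R)[F,F])\cap F'\big)/\big(\eta'(R')[F,F']\big)$. This is the asserted isomorphism.

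The only non-formal point is the identification of the connecting map with the one induced by $F'\hra F$; everything after that is bookkeeping with cosets of normal subgroups. If a self-contained argument is preferred, this identification can be read off directly from the bicomplex $X_{**}$ of Proposition~\ref{prop4}: the edge morphism $E^2_{01}\to H_1$ is represented, at the level of total complexes, by the map built from the augmentations $B_*(F')\to\Z$ and $B_*(F)\to\Z$ together with the inclusion $F'\hra F$, which after passing to the abelianised $H_1$-terms is precisely the map displayed above.
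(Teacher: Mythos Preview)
Your proof is correct and follows exactly the route the paper indicates: the paper's own proof is the single line ``Straightforward from Proposition~\ref{prop7} and Corollary~\ref{cor}'', and you have simply filled in the details of that argument. Your additional care about identifying the map $F'/(\eta'(R')[F,F'])\to F/(\eta(R)[F,F])$ with the one induced by the inclusion $F'\hra F$ is a reasonable point to make explicit, though the paper evidently regards it as part of the ``straightforward'' unpacking of the five-term sequence.
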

\begin{proof} Straightforward from Proposition~\ref{prop7} and Corollary~\ref{cor}.
\end{proof}

\begin{corollary} For any crossed module $(H, G, \mu)$, there exists an epimorphism $H_2(H, G, \mu)\twoheadrightarrow H_2 (G/\mu(H))$.
\end{corollary}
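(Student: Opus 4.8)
The plan is to produce the epimorphism $H_2(H,G,\mu)\twoheadrightarrow H_2(G/\mu(H))$ by comparing the Hopf formula of Corollary~\ref{cor1} with the classical Hopf formula for the group homology $H_2(G/\mu(H))$. First I would choose a free presentation~\eqref{equ4}
\[
0\to (R', F', \eta') \to (R, F, \eta) \to (H, G, \mu) \to 0
\]
of the crossed module $(H,G,\mu)$. Applying $\pi_0$ to the nerves, or equivalently passing to cokernels of the crossed-module maps, yields a surjection $F/\eta(R) \twoheadrightarrow G/\mu(H)$ with $F$ free, hence a free presentation of the group $G/\mu(H)$ in the ordinary sense; write $S\normal F$ for the kernel of $F\to G/\mu(H)$, so that $G/\mu(H)\cong F/S$ and by the classical Hopf formula
\[
H_2(G/\mu(H)) \cong \frac{[F,F]\cap S}{[F,S]}.
\]

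Next I would identify $S$ explicitly. Since $F\to F/\eta(R)\to G/\mu(H)$ and the first map has kernel $\eta(R)$ while the composite $F'\to F\to G$ lands in $\mu(H)$ (being the $\eta'(R')$-part mapped through the short exact sequence), one gets $S = \eta(R)\cdot F'$; here I use that $F'$ is precisely the kernel of $F\to G$ and $\eta(R)$ maps onto $\mu(H)\normal G$. Granting this, the numerator of the classical formula becomes $[F,F]\cap(\eta(R)\cdot F')$, which contains $(\eta(R)\cdot[F,F])\cap F'$, the numerator appearing in Corollary~\ref{cor1}; and the denominator $[F,S]=[F,\eta(R)\cdot F']\supseteq [F,F']\supseteq \eta'(R')\cdot[F,F']$ (using $\eta'(R')\seq F'$ and that $\eta'(R')$ is normal, together with the Peiffer-type containment $\eta'(R')\seq[F,F']$ which holds because the nerve argument forces $\eta'$ to factor appropriately—or, more safely, one checks $\eta'(R')\normal F$ and absorbs it). Thus there is a well-defined surjection
\[
\frac{(\eta(R)\cdot[F,F])\cap F'}{\eta'(R')\cdot[F,F']} \twoheadrightarrow \frac{[F,F]\cap(\eta(R)\cdot F')}{[F,\eta(R)\cdot F']}
\]
induced by inclusion of numerators and collapse of denominators, and composing with the two Hopf-formula isomorphisms gives the desired epimorphism $H_2(H,G,\mu)\twoheadrightarrow H_2(G/\mu(H))$.

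I expect the main obstacle to be the precise bookkeeping around the subgroups of $F$: verifying that $S=\eta(R)\cdot F'$ is exactly the kernel of $F\to G/\mu(H)$, that the claimed inclusions between numerators and between denominators genuinely hold (in particular that $[F,F']\seq [F,S]$ and that $\eta'(R')\cdot[F,F']$ maps into $[F,S]$), and that the resulting map on quotients is well-defined and surjective rather than merely a morphism of subquotients. An alternative, cleaner route would be to apply Proposition~\ref{prop7} to the short exact sequence $0\to(H,G,\mu)\to(H,G,\mu)\to(0,G/\mu(H),0)\to 0$—wait, this is not exact; instead use $0\to (H', \ker(G\to G/\mu(H)), \cdot)\to(H,G,\mu)\to(0,G/\mu(H),0)\to 0$ with the obvious quotient crossed module $(0,G/\mu(H),0)$, whose second homology is $H_2(G/\mu(H))$ by Proposition (second one, on $H_n((0,G,0),(0,A,0))$); then the five-term exact sequence of Proposition~\ref{prop7} gives $H_2(H,G,\mu)\to H_2(0,G/\mu(H),0)=H_2(G/\mu(H))$, and surjectivity of this map follows because the next term $G'/(\mu'(H')[\ker,G'])$ maps into $G/(\mu(H)[G,G])$ with the subsequent term $(G/\mu(H))/[G/\mu(H),G/\mu(H)] = H_1(G/\mu(H))$, forcing the $H_2$-map to be onto by exactness. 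I would actually pursue this second route as primary, since it sidesteps the subgroup computations entirely and only needs the already-established five-term sequence together with the identification of $H_2(0,G/\mu(H),0)$.
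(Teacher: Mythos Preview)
Your second route---applying the five-term sequence of Proposition~\ref{prop7} to a short exact sequence with quotient $(0,G/\mu(H),0)$---is correct and is essentially the paper's own argument, only with a different short exact sequence. The paper instead uses
\[
0\to (\Ker\mu, 0, 0) \to (H, G, \mu)\to (H/\Ker\mu, G, \mu)\to 0,
\]
so that the obstruction term $G'/(\mu'(H')\cdot[G,G'])$ vanishes because $G'=0$, and then identifies $H_2(H/\Ker\mu,G,\mu)\cong H_2(G/\mu(H))$ via Proposition~\ref{prop3}, the quotient being an inclusion crossed module. In your sequence the kernel is $(H,\mu(H),\mu)$, and the obstruction term vanishes for the equally simple reason that $\mu'(H')=\mu(H)=G'$, whence $G'/(\mu'(H')\cdot[G,G'])=0$; you should say this directly rather than the vaguer exactness argument you sketch. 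Either choice of short exact sequence finishes the proof in one line once this term is seen to be zero.

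Your first route, comparing Hopf formulae, has a genuine gap. The map you describe as ``induced by inclusion of numerators'' is not well-defined as stated: an element $x\in(\eta(R)\cdot[F,F])\cap F'$ need not lie in $[F,F]$, hence need not lie in $[F,F]\cap S$. One might try to send $x=rc$ (with $r\in\eta(R)$, $c\in[F,F]$) to the class of $c$, but then well-definedness requires $\eta(R)\cap[F,F]\seq[F,S]$, which is not obvious, and surjectivity onto $([F,F]\cap S)/[F,S]$ is a separate issue still. The misgivings you yourself express about this bookkeeping are justified; the five-term-sequence route avoids all of it.
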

\begin{proof} We have the following short exact sequence of crossed modules:
\begin{align*}
0\to (\Ker\mu, 0, 0) \to (H, G, \mu)\to (H/\Ker\mu, G, \mu)\to 0.
\end{align*}
This implies the existence of an epimorphism $H_2(H, G, \mu)\twoheadrightarrow H_2(H/\Ker\mu, G, \mu)$. Moreover, since
$(H/\Ker\mu, G, \mu)$ is an inclusion crossed module, $H_2(H/\Ker\mu, G, \mu)$ is isomorphic to $H_2(G/\mu(H))$.
\end{proof}

In Section~\ref{Higher} we shall consider higher-order versions of these two results.

\begin{proposition}\label{prop8} For any free presentation \eqref{equ4}, there is an isomorphism
\begin{align*}
H_3(H, G, \mu) \cong H_1((H, G, \mu), (R', F', \eta')_{ab}).
\end{align*}
\end{proposition}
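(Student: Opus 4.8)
The plan is to feed the free presentation~\eqref{equ4} into the Lyndon--Hochschild--Serre spectral sequence of Theorem~\ref{theo1}, taken with integral coefficients. This produces a first quadrant spectral sequence
\begin{align*}
E^2_{pq}=H_p\big(\Nerve_*(H,G,\mu), \HH_q(\Nerve_*(R',F',\eta'), \Nerve_*(0,\Z,0))\big) \Rightarrow H_{p+q}(R,F,\eta).
\end{align*}
Since $(R,F,\eta)$ is free, Corollary~\ref{cor} gives $H_n(R,F,\eta)=0$ for $n\geq 2$, so that $E^\infty_{pq}=0$ whenever $p+q\geq 2$. I would then identify the two bottom rows: because $\Nerve_*(0,\Z,0)$ is the constant simplicial group $\Z$ with trivial action, $\HH_0(\Nerve_*(R',F',\eta'), \Nerve_*(0,\Z,0))$ is again constantly $\Z$, whence $E^2_{p,0}=H_p(H,G,\mu)$; and by~\eqref{equ3} we have $\HH_1(\Nerve_*(R',F',\eta'), \Nerve_*(0,\Z,0))=\Nerve_*(R',F',\eta')_{ab}$, whence $E^2_{p,1}=H_p\big((H,G,\mu),(R',F',\eta')_{ab}\big)$, where $(H,G,\mu)$ acts on $(R',F',\eta')_{ab}$ via the conjugation action supplied by~\eqref{equ4}.

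The isomorphism of the statement will be the differential $d^2\colon E^2_{3,0}=H_3(H,G,\mu)\to E^2_{1,1}=H_1\big((H,G,\mu),(R',F',\eta')_{ab}\big)$. Surjectivity is essentially free: only the incoming $d^2$ from $E^2_{3,0}$ acts on the $(1,1)$-spot, so $E^\infty_{1,1}=\Coker d^2$, which vanishes since it lies in total degree~$2$; this is exactly the ``moreover'' part of Proposition~\ref{prop7} in the present situation, once one notes that $\Ker\!\big(H_2(R,F,\eta)\to H_2(H,G,\mu)\big)=0$ by Corollary~\ref{cor}. For injectivity, no differential enters $E^r_{3,0}$, so $E^3_{3,0}=\Ker d^2$, and then $0=E^\infty_{3,0}=E^4_{3,0}=\Ker\!\big(d^3\colon E^3_{3,0}\to E^3_{0,2}\big)$; thus it suffices to prove $E^3_{0,2}=0$, and since $E^3_{0,2}$ is a subquotient of $E^2_{0,2}$, it suffices to prove $E^2_{0,2}=0$.

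This final vanishing is the one point that is not pure bookkeeping, and it is where the hypothesis on~\eqref{equ4} is used a second time. The degree-zero component of the simplicial abelian group $\HH_2(\Nerve_*(R',F',\eta'), \Nerve_*(0,\Z,0))$ is $H_2(\Nerve_0(R',F',\eta'))=H_2(F')$, which is trivial because $F'$, as a subgroup of the free group $F$, is free by Nielsen--Schreier and so has trivial Schur multiplier. Hence the degree-zero part of $\Tot B\big(\Nerve_*(H,G,\mu), \HH_2(\Nerve_*(R',F',\eta'), \Nerve_*(0,\Z,0))\big)$ is zero, and therefore so is its zeroth homology $E^2_{0,2}$. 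Combining this with the two halves above shows that $d^2$ is an isomorphism, which is the claim. So the ``hard part'' is genuinely mild here: it consists in reading off the two bottom rows correctly via~\eqref{equ3} and in observing that the only potentially obstructing term $E^2_{0,2}$ is annihilated by the freeness of $\Nerve_0(R',F',\eta')=F'$. The whole argument is the crossed-module analogue of the classical isomorphism $H_3(Q)\cong H_1(Q,\bar R_{ab})$ attached to a group presentation $1\to\bar R\to\bar F\to Q\to 1$, where the corresponding obstruction $H_0(Q,H_2(\bar R))$ disappears for precisely the same reason.
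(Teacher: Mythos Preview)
Your argument is correct and follows the same route as the paper's own proof: apply the Lyndon--Hochschild--Serre spectral sequence of Theorem~\ref{theo1} to the free presentation with integral coefficients, use Corollary~\ref{cor} to kill the abutment in degrees $\geq 2$, identify $E^2_{3,0}$ and $E^2_{1,1}$ via~\eqref{equ3}, and obtain injectivity by showing $E^2_{0,2}=0$ from the freeness of $F'$. Your write-up is in fact slightly more careful than the paper's in two places: you correctly phrase the injectivity step as ``$E^3_{3,0}$ injects into $E^3_{0,2}$'' (the paper's ``isomorphic to $E^3_{0,2}$'' is a minor slip, harmless since the target is shown to vanish), and you make the Nielsen--Schreier justification for the freeness of $F'\leq F$ explicit.
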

\begin{proof} By Corollary~\ref{cor} we have $H_n(R, F, \eta)=0$, $n\geq 2$.
Therefore, from the Lyndon--Hochschild--Serre spectral sequence,
\begin{align*}
E^2_{pq}= H_p \big(\Nerve_*(H,G,\mu), \HH_q(\Nerve_*(R',F',\eta'), \Nerve_*(0,\Z, 0))\big) \Rightarrow
H_{p+q} (R,F,\eta),
\end{align*}
we derive that $E^2_{30}\to E^2_{11}$ is an epimorphism and $\Ker(E^2_{30}\to E^2_{11})$ must be isomorphic to $E^3_{02}$.
Since $E^2_{30}=H_3(H, G, \mu)$ and $E^2_{11}=H_1((H, G, \mu), (R', F', \eta')_{ab})$, it suffices to show that $E^2_{02}=0$.
By the definition $\HH_q(\Nerve_*(R',F',\eta'), \Nerve_*(0,\Z, 0))$ has the following form:
\[
\xymatrix{\cdots \ar@<1.5ex>[r] \ar@<-1.5ex>[r] \ar@<.5ex>[r] \ar@<-.5ex>[r] & H_m(R'\rtimes R'\rtimes F',\Z) \ar@<1ex>[r] \ar[r] \ar@<-1ex>[r] & H_m(R'\rtimes F', \Z) \ar@<.5ex>[r] \ar@<-.5ex>[r] & H_q(F', \Z).}
\]
Since $F'$ is a free group, $H_q(F', \Z)=0$, $q\geq 2$. Consequently
\begin{align*}
E^2_{0q}= H_0 \big(\Nerve_*(H,G,\mu), \HH_q(\Nerve_*(R',F',\eta'), \Nerve_*(0,\Z, 0))\big) =0
\end{align*}
for all $q\geq 2$.
\end{proof}

\section{Homology via non-abelian left derived functors}\label{Section Left Derived Functors}

We come back to the description of the forgetful/free adjunction between $\XMod$ and $\Set$ recalled just above Corollary~\ref{cor}. The pair of adjoint functors $(\F, \U)$ induces a comonad $\FF=(\FF, \de, \epsilon)$ on $\XMod$ in the usual way:
$\FF=\F\U\colon {\XMod\to \XMod}$, $\epsilon \colon \FF\to 1_{\XMod}$ is the counit and
$\de = \F \eta\U\colon \FF\to \FF^2$ where $\eta\colon 1_{\Set}\to \U\F$ is the unit of the adjunction.
Given any crossed module $(H, G, \mu)$, there is an augmented simplicial object $\FF_*(H, G, \mu)\to (H, G, \mu)$ in the category
$\XMod$ where
\begin{align*}
&\FF_n(H, G, \mu)= \FF^{n+1}(H, G, \mu), \\
&d_i^n=\FF^i(\epsilon (\FF^{n-i}(H, G, \mu))), \quad s_i^n=\FF^i(\de (\FF^{n-i}(H, G, \mu))), \quad 0\leq i\leq n.
\end{align*}
This is called the \defn{$\FF$-cotriple resolution of $(H, G, \mu)$}.

Let $\Ab$ denote the category of abelian groups and let $\T\colon \XMod\to \Ab$ be a functor. As in~\cite{Barr-Beck}, the \defn{left derived functors} of $T$ with respect to the comonad $P$ are given, for any crossed module $(H, G, \mu)$, by
\begin{align*}
&\LL_n\T(H, G, \mu)=\pi_n(\T(\FF_*(H, G, \mu))),
\end{align*}
where $\T(\FF_*(H, G, \mu))$ is the simplicial abelian group obtained by applying the functor $\T$ dimension-wise to the $\FF$-cotriple resolution of $(H, G, \mu)$. Note that the homotopy groups $\pi_n(\T(\FF_*(H, G, \mu)))$ agree with the homology groups of the simplicial abelian group $\T(\FF_*(H, G, \mu))$. The functors $\LL_n\T$, for $n\geq 0$, may also be interpreted as \emph{non-abelian left derived functors}, in the sense of~\cite{IH}, of $T$ relative to the projective class $\PP$ determined by the comonad $\FF$.

Let $\A\colon \XMod\to \Ab$ be the functor given by
\begin{align}\label{Functor A}
&(H, G, \mu) \mapsto \A(H, G, \mu)=G/(\mu(H)\cdot [G, G]).
\end{align}
Our goal is to show that $\LL_n\A(H, G, \mu)$ is isomorphic to $H_{n+1}(H, G, \mu)$ for each $n\geq 0$.
Since the functor $\A\colon \XMod\to \Ab$ is right exact, by Proposition~\ref{prop2} we have:
\begin{align*}
&\LL_0\A(H, G, \mu) = \A(H, G, \mu)=G/(\mu(H)[G, G])=H_1(H, G, \mu).
\end{align*}

\begin{lemma}\label{lemma4} For each $n\geq 0$, the following augmented simplicial group is aspherical:
\[
\xymatrix{\cdots \ar@<1ex>[r] \ar[r] \ar@<-1ex>[r] & \Nerve_n(\FF_1(H, G, \mu)) \ar@<.5ex>[r] \ar@<-.5ex>[r] & \Nerve_n(\FF_0(H, G, \mu)) \ar[r] & \Nerve_n(H, G, \mu).}
\]
Here $\Nerve_*(\FF_i(H, G, \mu))$ is the nerve of the crossed module $\FF_i(H, G, \mu)$.
\end{lemma}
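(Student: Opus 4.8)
The plan is to prove asphericity of the given augmented simplicial group by fixing $n$ and exploiting the fact that $\Nerve_n$ is, up to isomorphism of underlying sets, a finite iterated semidirect product, so that applying $\Nerve_n$ to the $\FF$-cotriple resolution yields something we can control degreewise. First I would recall that $\FF_*(H,G,\mu)\to(H,G,\mu)$ is the cotriple resolution, hence it is (as always for cotriple resolutions) \emph{split} at the level of underlying sets: there is a sequence of maps of sets providing a contraction, coming from the unit $\eta$ of the free/forgetful adjunction. Concretely, $\U(\FF_*(H,G,\mu))\to\U(H,G,\mu)$ admits a simplicial contraction because $\epsilon\F\circ\F\eta=1$ and the comonad identities make the standard extra degeneracies work on underlying sets. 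So the augmented simplicial set underlying $\FF_*(H,G,\mu)$ is contractible, and in particular the augmented simplicial set obtained by applying $\U$ and then taking any finite product of copies of the result is still contractible.

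Next I would observe that $\Nerve_n(H,G,\mu)=H\rtimes(\cdots(H\rtimes G)\cdots)$ has underlying set $\U(H,G,\mu)$ raised to a fixed finite power (namely $H^n\times G$, i.e. the product of the underlying sets occurring $n+1$ times in a suitable pattern), and this assignment of underlying sets is functorial and product-preserving. Therefore the augmented simplicial set underlying
\[
\Nerve_n(\FF_*(H,G,\mu))\to\Nerve_n(H,G,\mu)
\]
is a finite product of contractible augmented simplicial sets, hence contractible: it admits a (set-level) contraction, built componentwise from the contraction of $\U(\FF_*(H,G,\mu))\to\U(H,G,\mu)$. A contraction of the underlying augmented simplicial \emph{set} of a simplicial group is exactly a left contraction in the sense used in the proof of Lemma~\ref{lemma1}, so it witnesses asphericity: all homotopy groups vanish and the augmentation is surjective. (For the surjectivity of the augmentation and for $\Im d_0^0=\Nerve_n(H,G,\mu)$ one just notes that $\epsilon\colon\FF(H,G,\mu)\to(H,G,\mu)$ is a regular epimorphism of crossed modules, hence surjective on $H$ and on $G$, and $\Nerve_n$ preserves surjections since it is built from semidirect products of surjections.)

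The main obstacle is purely bookkeeping: one must check that the set-theoretic contraction of $\U(\FF_*(H,G,\mu))\to\U(H,G,\mu)$, when transported through the identification $\U\Nerve_n\cong\U(-)^{\times(n+1)}$ and assembled componentwise, really does satisfy the simplicial identities of a left contraction relative to the \emph{face and degeneracy maps of the $\FF$-cotriple resolution in dimension direction}, i.e. that the $\Nerve_n$-functoriality is compatible with the extra-degeneracy identities. This is routine because $\Nerve_n$ is a functor and the contraction maps, although not group homomorphisms, are genuine maps in $\Set$, so applying the functor $\U\Nerve_n(-)=\U(-)^{\times(n+1)}$ to the cotriple data and to the contraction preserves all the needed identities. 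Once that compatibility is recorded, Lemma~\ref{lemma1}'s mechanism applies verbatim and the asphericity follows; alternatively one may invoke the standard fact that applying \emph{any} functor to a simplicially contractible augmented object yields a simplicially contractible augmented object, which is exactly what is happening here with the functor $\U\Nerve_n$.
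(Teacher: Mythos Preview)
There is a genuine gap in your argument. The underlying set of $\Nerve_n(H,G,\mu)$ is $H^{n}\times G$, and for the cotriple resolution $\FF_i(H,G,\mu)=(R_i,F_i,\eta)$ this becomes $R_i^{\,n}\times F_i$. This is \emph{not} a power of $\U(\FF_i(H,G,\mu))=R_i\times F_i$, nor is the assignment $(H,G,\mu)\mapsto H^{n}\times G$ a functor of the single set $H\times G$: there is no functor $\Phi\colon\Set\to\Set$ with $\Phi\circ\U\cong\U\circ\Nerve_n$, because from $H\times G$ alone one cannot naturally recover $H$ and $G$ separately. Consequently the set-level contraction of $\U(\FF_*(H,G,\mu))\to\U(H,G,\mu)$, which is a map $R_i\times F_i\to R_{i+1}\times F_{i+1}$ that need not split as a product, does not produce a contraction of $R_*^{\,n}\times F_*\to H^{n}\times G$. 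Your ``alternatively'' clause has the same problem: the cotriple resolution is simplicially contractible only \emph{after} applying $\U$, not in $\XMod$ itself, and $\U\Nerve_n$ is a functor out of $\XMod$, not out of $\Set$, so the principle ``any functor preserves simplicial contractions'' does not apply to it in the way you need.

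What is actually required is knowing that the two augmented simplicial groups $R_*\to H$ and $F_*\to G$ are \emph{separately} aspherical; this is the content of the reference to \cite{Carrasco-Homology} in the paper's proof. Once that is known, one finishes not by a set-level contraction argument but by observing that $\Nerve_n(R_i,F_i,\eta)=R_i\rtimes(\cdots(R_i\rtimes F_i)\cdots)$ sits in an iterated short exact sequence of augmented simplicial groups with aspherical kernel and quotient, and the long exact homotopy sequence (or an inductive extension argument) then forces asphericity of the middle term. Your framework could be salvaged by first proving separate left contractions for $R_*$ and $F_*$ and then taking the product $R_*^{\,n}\times F_*$, but that separate contractibility is precisely the non-formal input you are missing.
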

\begin{proof} By the definition each $\FF_i(H, G, \mu)$, $i\geq 0$, is an inclusion crossed module. Thus we may write 
$\FF_i(H, G, \mu)=(R_i, F_i, \eta)$, $i\geq 0$, where the $R_{i}$ and $F_{i}$ are free. Then, both
\[
\xymatrix{\cdots \ar@<1.5ex>[r] \ar@<-1.5ex>[r] \ar@<.5ex>[r] \ar@<-.5ex>[r] & F_{2} \ar@<1ex>[r] \ar[r] \ar@<-1ex>[r] & F_{1} \ar@<.5ex>[r] \ar@<-.5ex>[r] & F_{0} \ar[r] & G}
\]
and
\[
\xymatrix{\cdots \ar@<1.5ex>[r] \ar@<-1.5ex>[r] \ar@<.5ex>[r] \ar@<-.5ex>[r] & R_{2} \ar@<1ex>[r] \ar[r] \ar@<-1ex>[r] & R_{1} \ar@<.5ex>[r] \ar@<-.5ex>[r] & R_{0} \ar[r] & H}
\]
are aspherical augmented simplicial groups (see~\cite{Carrasco-Homology}). This implies the lemma.
\end{proof}

\begin{theorem}\label{theo2} There is an isomorphism
\begin{align*}
\LL_n\A(H, G, \mu) \cong H_{n+1}(H, G, \mu), \quad n\geq 0.
\end{align*}
\end{theorem}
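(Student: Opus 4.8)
The plan is to compare the two simplicial objects computing each side by means of a spectral sequence comparison, exploiting Lemma~\ref{lemma4}. Recall that $H_{n+1}(H,G,\mu)$ is by definition $H_{n+1}\big(\Tot B(\Nerve_*(H,G,\mu),\Nerve_*(0,\Z,0))\big)$. On the other side, $\LL_n\A(H,G,\mu)=\pi_n\big(\A(\FF_*(H,G,\mu))\big)$, and since each $\FF_i(H,G,\mu)=(R_i,F_i,\eta)$ is an inclusion crossed module with $R_i$, $F_i$ free, Proposition~\ref{prop2} gives $\A(\FF_i(H,G,\mu))=H_1(\FF_i(H,G,\mu))=F_i/(R_i\cdot[F_i,F_i])=H_1(F_i/R_i)$. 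So $\LL_*\A(H,G,\mu)$ is the homotopy of the simplicial abelian group $[i]\mapsto H_1(F_i/R_i)$.

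First I would build a double (indeed triple) complex interpolating between the two descriptions. Consider the bicomplex $B\big(\FF_*(H,G,\mu)\big)$ whose $i$th column is $\Tot B(\Nerve_*(\FF_i(H,G,\mu)),\Nerve_*(0,\Z,0))$, with horizontal differential the alternating sum of the faces $d_j^i$ of the cotriple resolution; its total complex has homology that I will identify with $H_{*}(H,G,\mu)$ in one direction and with $\LL_*\A$ in the other. For the first identification: filter by columns and use that, by Corollary~\ref{cor}, $H_n(\FF_i(H,G,\mu))=0$ for $n\geq 2$ and $H_0=\Z$, $H_1(\FF_i(H,G,\mu))=H_1(F_i/R_i)$; so the column homology is concentrated in degrees $0$ and $1$, giving an $E^1$-page with two rows. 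The bottom row $[i]\mapsto\Z$ (with alternating-sum differentials) is the complex computing $H_*$ of the point and so is acyclic in positive degrees with $H_0=\Z$; the row in homological degree $1$ is exactly the simplicial abelian group $[i]\mapsto H_1(F_i/R_i)$ whose homology is $\LL_*\A(H,G,\mu)$. Chasing this two-row spectral sequence (the only possibly nonzero differential is $d^2$ out of the bottom row into the top row, and this must vanish in the relevant range because the bottom row is already exact) yields $H_{n}\big(\Tot B(\FF_*(H,G,\mu))\big)\cong\LL_{n-1}\A(H,G,\mu)$ for $n\geq 1$.

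For the second identification I would filter the same total complex the other way, by the rows coming from the cotriple-resolution direction, i.e. fix the Moore/nerve bidegree $(p,q)$ of the $B$-bicomplex and run the cotriple direction first. Here is where Lemma~\ref{lemma4} enters: for each fixed $m$ and $n$, the augmented simplicial group $\Nerve_n(\FF_*(H,G,\mu))\to\Nerve_n(H,G,\mu)$ is aspherical, hence by Lemma~\ref{lemma1} so is $\Z(\Nerve_n(\FF_*(H,G,\mu)))\to\Z(\Nerve_n(H,G,\mu))$, and more to the point the building blocks $B_m(\Nerve_n(\FF_i(H,G,\mu)))\otimes_{\Nerve_n(\FF_i(H,G,\mu))}\Z$ form, along $i$, an aspherical augmented simplicial abelian group with augmentation $B_m(\Nerve_n(H,G,\mu))\otimes_{\Nerve_n(H,G,\mu)}\Z$ (one applies a contracting homotopy as in the proof of Lemma~\ref{lemma1}, tensored down). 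Therefore the cotriple direction of this filtration collapses: the $E^1$-page is concentrated in cotriple-degree $0$ and equals precisely the bicomplex $B(\Nerve_*(H,G,\mu),\Nerve_*(0,\Z,0))$, so the total complex also computes $H_*(H,G,\mu)$. Combining the two computations gives $H_{n+1}(H,G,\mu)\cong\LL_n\A(H,G,\mu)$.

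The main obstacle I anticipate is organising the three simplicial/chain directions (the bar direction $m$, the nerve direction $q$, and the cotriple direction $i$) so that the two spectral-sequence arguments are genuinely applied to the same total complex, and in particular checking acyclicity in the cotriple direction at the level of the $B_m(-)\otimes_{(-)}\Z$ pieces rather than only for the nerves themselves — this requires noting that $B_m(\Gamma)\otimes_\Gamma\Z\cong B_{m-1}(\Gamma)\otimes\Z$ naturally in $\Gamma$ (as used in Lemma~\ref{lemma2}) so that a left contraction on $\Nerve_n(\FF_*(H,G,\mu))$ induces one after applying $B_{m-1}(-)\otimes\Z$, exactly the mechanism of Lemma~\ref{lemma1}. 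Once that naturality and the contraction are in place, both collapses are formal, and the degree bookkeeping ($H_*$ landing one degree above $\LL_*$) comes out of the extra $\Z$-row contributed by $H_0$ of each $\FF_i(H,G,\mu)$, as already visible in Proposition~\ref{prop2}.
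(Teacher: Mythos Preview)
Your proposal is correct and follows essentially the same strategy as the paper: build the triple complex whose $p$th column is $\Tot B(\Nerve_*(\FF_p(H,G,\mu)),\Nerve_*(0,\Z,0))$, compute one spectral sequence using Corollary~\ref{cor} to collapse the columns into two rows (yielding $\LL_{n-1}\A$ in degree $n$), and compute the other using Lemma~\ref{lemma4} to collapse the cotriple direction (yielding $H_n(H,G,\mu)$). The paper packages the second collapse by invoking Lemma~\ref{lemma2} directly (with $A_*=\Z$ constant, so $\pi_n(A_*)=0$ for $n\geq 1$), whereas you unwind that mechanism via the natural isomorphism $B_m(\Gamma)\otimes_\Gamma\Z\cong B_{m-1}(\Gamma)$ together with Lemma~\ref{lemma1}; these are the same argument at different levels of packaging.
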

\begin{proof}
Consider the bicomplex
\[
\xymatrix{{} \ar[d]_-{\partial} & {} \ar[d]_-{-\partial} & {} \ar[d]_-{\partial}\\
X_{01} \ar[d]_-{\partial} & X_{11} \ar[d]_-{-\partial} \ar[l] & X_{21} \ar[d]_-{\partial} \ar[l] & \cdots \ar[l] \\
X_{00} & X_{10} \ar[l] & X_{20} \ar[l] & \cdots, \ar[l]}
\]
where $(X_{p*}, \pa)=\Tot B\big(\Nerve_*(\FF_p(H, G, \mu)), \Nerve_*(0, \Z, 0)\big)$. Lemma~\ref{lemma2} together with Lemma~\ref{lemma4} imply that $\Tot X_{**}$ and $\Tot B\big(\Nerve_*(H, G, \mu), \Nerve_*(0, \Z, 0)\big)$ are quasi-isomorphic.
As a consequence,
\begin{align}\label{equ5}
H_n(\Tot X_{**})=H_n(H, G, \mu), \quad n\geq 0.
\end{align}
On the other hand we have the following spectral sequence:
\begin{align*}
E^1_{pq}=H_q(X_{p*})=H_q(\FF_p(H, G, \mu)) \Rightarrow H_{p+q}(\Tot X_{**}).
\end{align*}
Since $\FF_p(H, G, \mu)$ is a free crossed module for all $p\geq 0$, by Proposition~\ref{prop1}, Proposition~\ref{prop2} and
Corollary~\ref{cor} we have:
\begin{align*}
E^1_{pq} =
\begin{cases}
\Z & \text{when $q=0$,}\\
\A(\FF_p(H, G, \mu)) & \text{when $q=1$,} \\
0 & \text{when $q\geq 2$.}
\end{cases}
\end{align*}
This implies that
\begin{align*}
E^2_{pq} =
\begin{cases}
\Z & \text{when $p=0$ and $q=0$,}\\
0 & \text{when $p>0$ and $q=0$,}\\
\pi_p(\A(\FF_*(H, G, \mu))) & \text{when $q=1$,} \\
0 & \text{when $q\geq 2$.}
\end{cases}
\end{align*}
Since $E^\infty_{pq}=E^2_{pq}$, we derive an isomorphism:
\begin{align}\label{equ6}
H_{n+1}(\Tot X_{**})\cong \pi_n(\A(\FF_*(H, G, \mu)))\cong \LL_n\A(H, G, \mu), \quad n\geq 0.
\end{align}
Thus,~\eqref{equ5} and~\eqref{equ6} complete the proof.
\end{proof}

\section{Higher Hopf formulae}\label{Higher}

Theorem~\ref{theo2} may be used to obtain Hopf formulae for $H_{n+1}(H, G, \mu)$. Indeed, the functor $\A$ is of a type considered in~\cite{Everaert-Gran-TT}, so that the general theory developed there applies.
 
Given a crossed module $(H, G, \mu)$, let $P_{*}(H, G, \mu)=(R_{*},F_{*},\eta)$ be its $P$-cotriple resolution. Let us fix some $n\geq 1$, and write $\langle n\rangle=\{0,\dots,n-1\}$. Following~\cite{EGVdL,EGoeVdL}, the $n$-truncation $(R_{i},F_{i},\eta)_{i< n}$ of this resolution may be considered as an \defn{$n$-fold free presentation} of $(H, G, \mu)$. We shall write $(R,F,\eta)$ for $(R_{n-1},F_{n-1},\eta)$, and denote the kernel of
\[
d^{n}_{i}\colon (R_{n-1},F_{n-1},\eta)\to (R_{n-2},F_{n-2},\eta)
\]
for $i\in \langle n\rangle$ by $(R'_{i},F'_{i},\eta'_{i})\normal (R,F,\eta)$. Then we find the following:

\begin{theorem}[Hopf Formula]\label{Theorem Hopf}
There is an isomorphism
\begin{align*}
H_{n+1}(H, G, \mu) \cong \frac{(\eta(R)\cdot [F, F])\cap \bigcap_{i\in n}F'_{i}}
{\eta (\bigcap_{i\in \langle n\rangle}R'_{i})\cdot \prod_{I\subseteq \langle n\rangle}[\bigcap_{i\in I}F'_{i}, \bigcap_{i\not\in I}F'_{i}]}.
\end{align*}
\end{theorem}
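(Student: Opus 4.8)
The plan is to combine Theorem~\ref{theo2}, which identifies $H_{n+1}(H,G,\mu)$ with the non-abelian left derived functor $\LL_{n}\A(H,G,\mu)$ of the right exact functor $\A\colon\XMod\to\Ab$ from~\eqref{Functor A}, with the general higher Hopf formula machinery of Everaert and Gran~\cite{Everaert-Gran-TT}. The first task is to recognise $\A$ as a functor of the type treated there. Since $\XMod$ is a semi-abelian variety (indeed a variety of $\Omega$-groups), the category of crossed modules carries the usual notion of a Birkhoff subcategory, and $\A$ factors as the reflection onto the Birkhoff subcategory of those abelian crossed modules of the form $(0,V,0)$ with $V$ an abelian group (equivalently, $\A(H,G,\mu)$ is the largest quotient on which both $\mu(H)$ and $[G,G]$ become trivial). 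One then checks that $\A$ preserves regular epimorphisms and that the induced comonadic homology $\LL_{\bullet}\A$ relative to the projective class $\PP$ of the $\FF$-cotriple resolution coincides with the derived functor already used; this is essentially the content of Theorem~\ref{theo2} together with the remarks preceding it, so little new work is needed.

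Next I would invoke the main theorem of~\cite{Everaert-Gran-TT} (the higher Hopf formula for comonadic homology associated with a Birkhoff reflector). Applied to an $n$-fold free presentation $(R_{i},F_{i},\eta)_{i<n}$ of $(H,G,\mu)$ obtained by truncating the $P$-cotriple resolution, that theorem yields
\[
\LL_{n}\A(H,G,\mu)\;\cong\;\frac{[\text{$1$-dimensional piece}]\cap\bigcap_{i\in\langle n\rangle}(R'_{i},F'_{i},\eta'_{i})}{[\text{commutator piece}]}
\]
as an object of $\XMod$, where the numerator is the intersection of the kernels of the $n$ ``last'' face maps with the kernel of the canonical map to $\A(R,F,\eta)$, and the denominator is the join, over all subsets $I\subseteq\langle n\rangle$, of the relevant Higgins/Huq commutators $[\bigcap_{i\in I}(R'_{i},\dots),\bigcap_{i\notin I}(R'_{i},\dots)]$ together with the image under $\eta$ of $\bigcap_{i\in\langle n\rangle}(R'_{i},\dots)$. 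The remaining work is to translate this intrinsic formula, computed inside $\XMod$, into the explicit group-theoretic expression in the statement.

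The main obstacle — and the heart of the argument — is this translation step: one must compute the relevant commutators of normal subobjects in $\XMod$ in terms of ordinary group commutators in $F$ and its subgroups $F'_{i}$, and verify that the abelianisation-type quotient collapses the $R$-coordinate, leaving only the $F$-coordinate visible. Concretely, for a normal crossed submodule $(R'_{i},F'_{i},\eta'_{i})\normal(R,F,\eta)$ the Huq commutator with another such normal crossed submodule is computed componentwise, and because $\A$ only records the base group modulo $\eta(R)\cdot[F,F]$, the $R$-level data contributes only through $\eta(\bigcap_{i}R'_{i})$ in the denominator and through the constraint of lying in $\bigcap_{i}F'_{i}$ and in $\eta(R)\cdot[F,F]$ in the numerator. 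Carrying out this bookkeeping carefully — in particular checking that the join of subgroups indexed by $I\subseteq\langle n\rangle$ produced by the categorical formula matches the product $\prod_{I\subseteq\langle n\rangle}[\bigcap_{i\in I}F'_{i},\bigcap_{i\notin I}F'_{i}]$, and that no cross terms involving $\eta(R'_{i})$ beyond $\eta(\bigcap_{i}R'_{i})$ survive — then yields exactly the displayed formula. For $n=1$ this reduces to Corollary~\ref{cor1}, which provides a consistency check on the conventions.
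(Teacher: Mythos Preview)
Your overall strategy matches the paper's: use Theorem~\ref{theo2} to identify $H_{n+1}(H,G,\mu)$ with $\LL_n\A(H,G,\mu)$, recognise $\A$ as a Birkhoff reflector, and then appeal to the machinery of~\cite{Everaert-Gran-TT}. Where you diverge from the paper is in what you regard as ``the heart of the argument''. You treat the translation of the categorical Hopf formula into the displayed group-theoretic expression as routine bookkeeping with Huq commutators computed componentwise. This is where the gap lies: the commutators appearing in the general higher Hopf formulae of~\cite{EGVdL, Everaert-Gran-TT} are \emph{relative} (Galois-theoretic) commutators attached to the reflection, not ordinary Huq commutators, and for an arbitrary Birkhoff reflector on $\XMod$ they would be crossed-module commutators, not group commutators in $F$. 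The fact that the denominator reduces to a product of plain group commutators $[\bigcap_{i\in I}F'_{i},\bigcap_{i\notin I}F'_{i}]$ together with $\eta(\bigcap_i R'_i)$ is not automatic and is not obtained by a componentwise calculation.

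The paper bypasses this computation entirely by observing a structural feature you do not mention: the Birkhoff subcategory onto which $\A$ reflects is the intersection of $\Ab(\XMod)$ with the subvariety $\Gp$ of discrete crossed modules $(0,G,0)$, and the reflector $\XMod\to\Gp$, namely the connected-components functor $(H,G,\mu)\mapsto G/\mu(H)$, is \emph{protoadditive}. This places the situation squarely within the scope of~\cite[Corollary~6.12]{Everaert-Gran-TT} (indeed, of the worked example ``Internal groupoids with coefficients in abelian objects'' there, specialised to $\mathcal{A}=\Gp$), which directly outputs the formula with ordinary group commutators. So the missing ingredient in your proposal is the identification of the protoadditive factorisation of $\A$; once that is in hand, no translation step is needed.
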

\begin{proof}
This is an application of Corollary~6.12 in~\cite{Everaert-Gran-TT}. In fact the situation we are considering is an instance of an example given on page~3674 of that paper, in the subsection entitled \emph{Internal groupoids with coefficients in abelian objects}. Via the equivalence between internal categories and crossed modules, it is the special case of that example for $\mathcal{A}=\Gp$. 

The functor $\A\colon \XMod\to \Ab\colon (H, G, \mu) \mapsto G/(\mu(H)\cdot [G, G])$ may thus be seen as the reflector of $\XMod$ to an intersection of two of its subvarieties: $\Ab(\XMod)$ on the one hand, and $\Gp$ (discrete crossed modules) on the other. The reflector to~$\Gp$ is \emph{protoadditive}, being the connected components functor viewed as ${(H,G,\mu)\mapsto G/\mu(H)}$. Hence the requirements of~\cite[Corollary~6.12]{Everaert-Gran-TT} are satisfied.
\end{proof}

Note how this particularises to Corollary~\ref{cor1} when $n=1$. Similarly, the first part of Proposition~\ref{prop7} follows from a result in \cite{EverVdL2} via the interpretation in terms of cotriple homology. In fact, it is the tail of a long exact sequence: see~\cite{TomasThesis, GVdL2}.

\providecommand{\noopsort}[1]{}

\end{document}